\newcommand{\define}[1]{{\bf \boldmath{#1}}}
\newcommand{\namedset}[1]{\mathbb{#1}}
\newcommand{\N}{\namedset N}
\newcommand{\namedcat}[1]{\mathsf{#1}}
\newcommand{\Cat}{\namedcat{Cat}}
\newcommand{\CMC}{\namedcat{CMC}}
\newcommand{\CMon}{\namedcat{CMon}}
\newcommand{\Set}{\namedcat{Set}}
\newcommand{\Petri}{\namedcat{Petri}}
\newcommand{\cPetri}{\namedcat{cPetri}}
\newcommand{\UNFOLD}{\mathtt{UNFOLD}}
\newcommand{\Ob}{\:\mathrm{Ob}}
\newcommand{\Mor}{\:\mathrm{Mor}}
\newcommand{\Span}{\mathsf{Span}}
\newcommand{\U}{\mathbb{U}}
\newcommand{\Dbl}{\mathsf{Dbl}}
\newcommand{\Disp}{\mathsf{Disp}}
\newcommand*{\relrelbarsep}{.386ex}
\newcommand*{\relrelbar}{%
  \mathrel{%
    \mathpalette\@relrelbar\relrelbarsep
  }%
}
\newcommand*{\@relrelbar}[2]{%
  \raise#2\hbox to 0pt{$\m@th#1\relbar$\hss}%
  \lower#2\hbox{$\m@th#1\relbar$}%
}
\providecommand*{\rightrightarrowsfill@}{%
  \arrowfill@\relrelbar\relrelbar\rightrightarrows
}
\providecommand*{\leftleftarrowsfill@}{%
  \arrowfill@\leftleftarrows\relrelbar\relrelbar
}
\providecommand*{\xrightrightarrows}[2][]{%
  \ext@arrow 0359\rightrightarrowsfill@{#1}{#2}%
}
\providecommand*{\xleftleftarrows}[2][]{%
  \ext@arrow 3095\leftleftarrowsfill@{#1}{#2}%
}
\definecolor{darkgreen}{rgb}{0,0.45,0}
\crefname{equation}{}{}
\crefname{item}{}{}
\newtheorem*{thm*}{Theorem}
\theoremstyle{remark}
\newtheorem*{rmk*}{Remark}
\newtheorem*{lem*}{Lemma}
\theoremstyle{plain}
\newtheorem*{defn*}{Definition}
\newtheorem*{cor*}{Corollary}
\theoremstyle{definition}
\newtheorem*{examples*}{Examples}
\newtheorem{prop*}{Proposition}
\theoremstyle{plain}
\newtheorem{thm}{Theorem}[section]
\theoremstyle{plain}
\newtheorem{prop}[thm]{Proposition}
\theoremstyle{remark}
\theoremstyle{plain}
\theoremstyle{plain}
\theoremstyle{definition}
\newtheorem{defn}[thm]{Definition}
\theoremstyle{definition}
\newtheorem{examples}[thm]{Example}
\newcommand{\maps}{\colon}
\newcommand*\pgfdeclareanchoralias[3]{%
  \expandafter\def\csname pgf@anchor@#1@#3\expandafter\endcsname
     \expandafter{\csname pgf@anchor@#1@#2\endcsname}}
\tikzset{
    circnode/.style={
      circle, draw=red, very thin, outer sep=0.025em, minimum size=2em,
      fill=red, text centered},
    integral/.style={
      circle, draw=black, very thick, outer sep=0.025em,
      minimum size=2em, fill=blue!5, text centered},
    multiply/.style={
      circle, draw=black, very thick, outer sep=0.025em,
      minimum size=2em, fill=blue!5, text centered},
    zero/.style={
      circle, draw=black, very thick, minimum size=0.15cm, fill=black,
      inner sep=0, outer sep=0},
    bang/.style={
      circle, draw=black, very thick, minimum size=0.15cm, fill=green!10,
      inner sep=0, outer sep=0},
    delta/.style={
      regular polygon, regular polygon sides=3, minimum size=0.4cm, inner
      sep=0, outer sep=0.025em, draw=black, very thick, fill=green!10},
    codelta/.style={
      regular polygon, regular polygon sides=3, shape border rotate=180, minimum size=0.4cm,
      inner sep=0, outer sep=0.025em, draw=black, very thick, fill=green!10},
    plus/.style={
      regular polygon, regular polygon sides=3, shape border rotate=180, minimum size=0.4cm,
      inner sep = 0, outer sep=0.025em, draw=black, very thick, fill=black},
    coplus/.style={
      regular polygon, regular polygon sides=3, minimum size=0.4cm,
      inner sep = 0, outer sep=0.025em, draw=black, very thick, fill=black},
    sqnode/.style={
      regular polygon, regular polygon sides=4, minimum size=2.6em,
      draw=black, very thick, inner sep=0.2em, outer sep=0.025em,
      fill=yellow!10, text centered},
    bigcirc/.style={
      circle, draw=black, very thick, text width=1.6em, outer sep=0.025em,
      minimum height=1.6em, fill=blue!5, text centered}
}
\tikzstyle{tri}=[regular polygon,regular polygon sides=3,shape border rotate=1
\tikzstyle{simple}=[-,line width=2.000]
\tikzstyle{arrow}=[-,postaction={decorate},decoration={markings,mark=at position .5 with {\arrow{>}}},line width=1.100]
\tikzstyle{none}=[inner sep=0pt]
\definecolor{lblue}{rgb}{0,250,255}
\tikzstyle{species}=[circle,fill=yellow,draw=black,scale=1.15]
\tikzstyle{transition}=[rectangle,fill=lblue,draw=black,scale=1.15]
\tikzstyle{inarrow}=[->, >=stealth, shorten >=.03cm,line width=1.5]
\tikzstyle{empty}=[circle,fill=none, draw=none]
\tikzstyle{inputdot}=[circle,fill=black,draw=black, scale=.25]
\tikzstyle{inputarrow}=[->,draw=purple, shorten >=.05cm]
\tikzstyle{simple}=[-,draw=black,line width=1.000]
\tikzstyle{place}=[circle,thick,draw=blue!75,fill=blue!20,minimum size=6mm]
\tikzstyle{red place}=[place,draw=red!75,fill=red!20]
\tikzstyle{transition}=[rectangle,thick,draw=black!75,
\newcommand{\Int}{\textstyle{\int}}
\newcommand{\Kl}{\mathsf{Kl}}
\title[Colored Petri Nets are Functors]{Colored Petri Nets are Monoidal Double Functors}
\author[Master and Moeller]{Jade Master and Joe Moeller}
\email{jmoeller31415@gmail.com, jadeedenstarmaster@gmail.com}
\begin{document}

\begin{abstract}
We give a characterization of colored Petri nets as monoidal double functors. Framing colored Petri nets in terms of category theory allows for canonical definitions of various well-known constructions on colored Petri nets. In particular, we show how morphisms of colored Petri nets may be understood as natural transformations. The displayed category construction explains how lax double functors are equivalent to functors with codomain their former domain. We use this result to characterize the unfolding of colored Petri nets in terms of free symmetric monoidal categories.
\end{abstract}

\maketitle
\setcounter{tocdepth}{1} 
\tableofcontents{}

Petri nets are a widely studied formalism for representing networks of processes and resources. When a system has sufficient complexity, modeling with ordinary Petri nets quickly becomes unwieldy. Colored Petri nets alleviate this problem by allowing repetitive structures to be encoded as extra data attached to the Petri net. This extra data can be thought of as a more detailed interpretation of the places and transitions in the underlying Petri net. From the ordinary definition of colored Petri net \cite{jensen2013coloured}, it is not clear how a colored Petri net can be obtained by applying a semantic interpretation to an ordinary Petri net. In this paper, we make precise the way in which colored Petri nets are syntax applied to semantics. Indeed, in Proposition \ref{correspond}, we show how a colored Petri net naturally determines a symmetric monoidal double functor
\[ K \maps FP \to \Span(\CMon). \]
Here $P$ is an ordinary Petri net, 
$FP$ is the categorical semantics of $P$ upgraded trivially to a double category, and $\Span(\CMon)$ is the double category whose loose morphisms are spans of commutative monoids. Here $FP$ is regarded as the ``syntax'', and $\Span(\CMon)$ is the ``semantics''.

Rephrasing colored Petri nets in this way has many advantages. In particular,
\begin{itemize}
    \item Transformations between double functors give a natural definition of morphism between colored Petri nets (Definition \ref{morphism}). In this paper, we compare this definition to the definition of morphism of colored Petri nets to the definition given by Lakos \cite{lakosabstraction, lakoscomposing}. We find that our more abstract definition retains many of the desirable qualities of Lakos' definition.
 \item In 1990 \cite{monoids} Montanari and Meseguer elegantly illustrated the connection between Petri nets and monoidal category theory by giving an operational semantics functor which turns each Petri net into a free symmetric monoidal category. The presentation of colored Petri nets as functors allows for an analogous operational semantics to be defined based on the Grothendieck construction. In Definition \ref{semantics} we construct this operational semantics category and in Proposition \ref{soundness} we prove that it faithfully captures the firing sequences of a colored Petri net.
\item Unfolding is an algorithmic process which turns a colored Petri net into an ordinary one for the purposes of analysis \cite{jensen2013coloured}. In this paper we describe unfolding as a functor 
    \[ \mathtt{UNFOLD} \maps \cPetri \to \Petri.\]
    The correctness of unfolding can be expressed concisely using category theory. In Theorem \ref{unfold}, we show that there is a diagram
\[
\begin{tikzcd} 
& \Petri \ar[dr,"F"] & \\
\cPetri \ar[ur,"\UNFOLD"] \ar[rr,"\int",swap]  & &\CMC.
\end{tikzcd}
\]
which commutes up to isomorphism. This commutativity expresses the fact that unfolding preserves processes by providing an isomorphism between the sequences of processes of a colored Petri net and the sequences of processes of its unfolding.
\end{itemize}

A plan of this paper is as follows: 
\begin{itemize}
    \item In Section \ref{sec:petri} we give a review of Petri nets and their process semantics from the perspective of category theory.
    \item In Section \ref{sec:coloredpetri} we introduce a definition of colored Petri nets and their morphisms and discuss their relationship to the traditional notions of colored Petri nets and their morphisms.
   \item In Section \ref{sec:unfolding} we use the displayed category construction to give a categorical operational semantics of colored Petri nets.
   \item In Section \ref{unfoldingtwo} we show that the categorical operational semantics of a colored Petri net is freely generated by its unfolding.
\end{itemize}
\subsection{Related Work}
A characterization of colored Petri nets as functors has already appeared in \cite{Guarded}.  Although \cite{Guarded} and the current paper use similar ideas, they were developed independently. Furthermore, the colored Petri nets studied in this paper have some differences from the colored Petri nets studied by Genovese and Spivak:
\begin{itemize}
\item The domain is not the same: The free symmetric monoidal category used by Genovese and Spivak is the non-commutative variant developed by Sassone \cite{SassoneStrong}. We use the free commutative monoidal category functor originated in \cite{monoids} and clarified in \cite{GeneralizedPetriNets}. Our approach gives an operational semantics which highlights the \emph{collective token philosophy}; the order and identities of the tokens are not represented in this operational semantics. On the other hand, the construction of Sassone follows the \emph{individual token philosophy} which does keep track of this data. We chose the free commutative monoidal category because it's existence follows from more general categorical principles. To freely generate symmetric monoidal categories which exhibit the individual token philosophy, we suggest using pre-nets, a non-commutative variant of Petri nets which have a more straightforward adjunction into the right sort of symmetric monoidal category \cite{functorialsemantics}.
\item The arc inscriptions are not the same: In Genovese and Spivak's approach, each transition is mapped to an isomorphism class of spans of sets. On the other hand, in our approach each transition is annotated with a span in the Kleisli category $\Kl(\mathbb{N})$. Using the Kleisli category allows the colors of the transitions to have multisets of inputs and outputs whereas the colors on the transitions in \cite{Guarded} may only map unique inputs to unique outputs. As proven in Proposition \ref{correspond}, our definition matches the original definition in \cite{jensen}.
\item The definition in \cite{Guarded} is evil: 
Evil is not meant as a moral judgement, just that the model does not satisfy the principle of equivalence (i.e.\ \cite{evil}).
The codomain of a colored Petri net in Genovese and Spivak's approach is a decategorification of the double category $\Span(\Set)$ whose morphisms are equivalence classes of spans. Defining a functor into this category requires choosing a non-canonical representatives from these equivalence classes. If these representatives are chosen inconsistently, two colored Petri nets which are equal may have totally different presentations. To make matters worse, in \cite{Guarded} the authors have also decategorified the cartesian monoidal structure of $\Span(\Set)$ to make it strictly associative and unital. With so many equivalence relations imposed on spans of sets, it is unclear how to choose representatives in consistent way. In this paper we have remedied this issue by using lax functors and double categories so that each transition is annotated by an actual span rather than an equivalence class.

\end{itemize}
In \cite{baez2019network}, the authors also use the Grothendieck construction to attach extra information to a Petri net. In particular, they use the monoidal Grothendieck construction \cite{moeller2020monoidal} to understand network models with catalysts. 

\section{Petri Nets and Their Semantics}
\label{sec:petri}

One one hand, Petri nets can be thought of as (directed, multi-) graphs with two distinct types of nodes. However, it is more in line with the spirit  of Petri nets to think of them as graphs in which arrows are permitted to have a multiset of source nodes, and a multiset of target nodes. 
\begin{defn}
A commutative monoid is a set equipped with an associative, unital, and commutative, binary operation. A commutative monoid homomorphism is a function between commutative monoids which preserves the binary operation and the identity. Let $\CMon$ denote the category of commutative monoids and their homomorphisms. The free commutative monoid on a set $X$ is given by 
\[
    \N[X]  = \{ a \maps X \to \N \, | \, f(x) \neq 0 \text{ for only finitely many elements of } X \} 
\]
with addition given pointwise. Let $\N$ denote the free commutative monoid monad $\N \maps \Set \to \Set$.
\end{defn} 



\begin{defn}
\label{def:Petri}
    A \define{Petri net} is a pair of functions of the following form.
    \[\begin{tikzcd}
        T 
        \arrow[r, shift left, "s"]
        \arrow[r, shift right, "t", swap]
        & 
        \N[S]
    \end{tikzcd}\]
    We call $T$ the set of \define{transitions}, $S$ the set of \define{places}, $s$ the \define{source} function and $t$ the \define{target} function. A \define{Petri net morphism} $(f,g) \maps (T,S,t,s) \to (T',S',t',s')$ consists of a pair of functions $f \maps T \to T'$ and $g \maps S \to S'$ such that the following diagrams commute.
    \[
    \begin{tikzcd}
        T
        \arrow[r, "s"]
        \arrow[d, "f", swap]
        &
        \N[S]
        \arrow[d, "{\N[g]}"]
        \\
        T'
        \arrow[r, "s'", swap]
        &
        \N[S']
    \end{tikzcd}
    \qquad
    \begin{tikzcd}
        T
        \arrow[r, "t"]
        \arrow[d, "f", swap]
        &
        \N[S]
        \arrow[d, "{\N[g]}"]
        \\
        T'
        \arrow[r, "t'", swap]
        &
        \N[S']
    \end{tikzcd}\] 
    Let $\Petri$ denote the category of Petri nets and Petri net morphisms, with composition defined pointwise.
\end{defn}

\noindent Our definition of morphisms between Petri nets follows that found in the work of Baez and Master \cite{GeneralizedPetriNets, OpenPetriNets}. This differs from the earlier definition used by Sassone \cite{SassoneStrong, SassoneAxiom} and Degano--Meseguer--Montanari \cite{DMM}, in that the definition presented here requires that the homomorphism between free commutative monoids come from a function between the sets of places. We have chosen our definitions because they are more well-behaved categorically. In particular, our category $\Petri$ is cocomplete (\cite{GeneralizedPetriNets}, Prop.\ 3.10).

Petri nets have a natural semantics which is described by \emph{the token game}. Each place of a Petri net is equipped with a natural number of tokens.  Such an assignment is called a \define{marking}, and is formally represented by an element $m \in \N[S]$, or equivalently, a function $m \maps S \to \N$. Players are then allowed to shuffle the tokens around by \emph{firing} transitions. Given a particular marking $m$, a transition $\tau$ can be fired if there are enough tokens in all of its source places. Specifically, $\tau$ can be fired if and only if $s(\tau) \leq m$ under the pointwise order on $\N[S]$. Formally, an \define{atomic firing} of $P$ is a tuple $(\tau,x,y)$, where $\tau$ is a transition and $x$ and $y$ are markings such that $x-s(\tau) + t(\tau) = y$. Evocatively, we write such tuples as $\tau \maps x \to y$. Firings can be built up from atomic firings via parallel and sequential composition. Given two firings $\tau \maps x \to y$ and $\tau' \maps x' \to y'$, we write their parallel firing as $\tau + \tau'  \maps x + x' \to y + y'$, where the sum of the markings is the sum in $\N[S]$. Given two firings $\tau \maps x \to y$ and $\sigma \maps y \to z$, we write their sequential firing as $\sigma \circ \tau \maps x \to z$. 

As suggested by our choice of notation, these operations fit together into the structure of a monoidal category, which we will denote as $FP$. The objects of $FP$ are markings of $P$, the morphisms are ``firing patterns'', or legal sequences of transition firings, potentially in parallel, up to a very natural equivalence of patterns. Meseguer and Montanari were the first to demonstrate the relationship between Petri nets and symmetric monoidal categories \cite{monoids}.
Petri nets generate a specific kind of symmetric monoidal category.
\begin{defn}
A \define{commutative monoidal category} is a commutative monoid object internal to the 1-category $\Cat$. Explicitly, a commutative monoidal category is a strict monoidal category $(C,\otimes,I)$, such that each symmetry map $\sigma_{a,b} \colon a \otimes b \to b \otimes a$ is an identity map.
\end{defn}
\noindent A commutative monoidal category is precisely a category where the objects and morphisms form commutative monoids and the structure maps are commutative monoid homomorphisms. 
\begin{defn}
	Let $\CMC$ be the category whose objects are commutative monoidal categories and whose morphisms are strict monoidal functors.
\end{defn}
\noindent Note that every monoidal functor between commutative monoidal categories is automatically a strict symmetric monoidal functor, so the adjective symmetric is not included in the above definition. 
\begin{prop}
There is an adjunction
\[
\begin{tikzcd}
\Petri \ar[r,bend left,"F"] \ar[r,phantom,"\bot"] & \CMC \ar[l,bend left,"U"] 
\end{tikzcd}
\]
whose left adjoint generates the free commutative monoidal category on a Petri net.
\end{prop}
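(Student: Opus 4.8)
The plan is to exhibit the two functors explicitly and establish the natural bijection $\CMC(FP, C) \cong \Petri(P, UC)$. First I would construct $F$. On a Petri net $P = (s,t \maps T \to \N[S])$ the commutative monoidal category $FP$ has object monoid $\N[S]$, with $\otimes$ given by addition of markings and unit the empty marking; its morphisms are the equivalence classes of formal parallel ($+$) and sequential ($\circ$) composites of the generating firings $\tau \maps s(\tau) \to t(\tau)$ and identities, quotiented by exactly the axioms of a commutative monoid object in $\Cat$ (strict associativity and unitality of $\otimes$ and $\circ$, the interchange law, and triviality of the symmetries). One checks that this is a well-defined object of $\CMC$ and that a Petri net morphism $(f,g) \maps P \to P'$ induces a strict monoidal functor acting by $g$ on generating objects and by $f$ on generating morphisms, yielding functoriality of $F$.

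Next I would define the forgetful functor $U \maps \CMC \to \Petri$ and verify the adjunction through its universal property. The places of $UC$ are the underlying set of the object monoid $C_0$, and the transitions record the morphisms of $C$ together with the data expressing their (co)domains as markings. The bijection then follows from the observation that, because $FP$ is free, a strict monoidal functor $G \maps FP \to C$ is completely determined by a function $S \to |C_0|$ (equivalently a monoid homomorphism $\bar\psi \maps \N[S] \to C_0$) together with, for each transition $\tau$, an arbitrary choice of morphism $G\tau \maps \bar\psi(s\tau) \to \bar\psi(t\tau)$ in $C$, subject to no further constraints; this is precisely the data of a Petri net morphism $P \to UC$. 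Naturality in both variables and the triangle identities are then routine.

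The main obstacle is the reconciliation between the \emph{free} commutative monoid $\N[S]$ of markings indexing the objects of $FP$ and the honest, possibly non-free, object monoid $C_0$ of a general commutative monoidal category. A strict monoidal functor silently evaluates a formal marking $\sum_i n_i [x_i]$ into the genuine object $\bigotimes_i \psi(x_i)^{\otimes n_i}$ of $C$ via the counit of the adjunction $\N \dashv |{-}|$ on $\CMon$, whereas a Petri net morphism records its data into the free monoid $\N[|C_0|]$. Getting $U$ right, so that the source and target maps of $UC$ retain just enough of the decomposition of an object into places for the two hom-sets to agree on the nose, is the delicate point. A cleaner, more structural alternative is to factor $F$ as the free commutative monoid on transitions $\Petri \to \mathsf{Graph}(\CMon)$ followed by the free internal category functor $\mathsf{Graph}(\CMon) \to \Cat(\CMon) = \CMC$, each a left adjoint (the latter because $\CMon$ is cocomplete), and to compose the resulting right adjoints; the same $\N \dashv |{-}|$ subtlety reappears as the description of the right adjoint of the first functor.
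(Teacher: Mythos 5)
Your proposal is correct in outline, but it takes a genuinely different route from the paper: the paper's entire proof is a citation of \cite[Thm.\ 5.1]{GeneralizedPetriNets}, which establishes such an adjunction between $\mathsf{Q}$-nets and $\mathsf{Q}$-monoidal categories for an \emph{arbitrary} Lawvere theory $\mathsf{Q}$, and then instantiates it at the theory of commutative monoids. What you wrote is essentially a reconstruction of that cited theorem in this special case. Your diagnosis of the delicate point is exactly right: the right adjoint cannot naively take underlying data, because the source and target of a Petri net land in the free monoid $\N[S]$ while morphisms of $C$ have (co)domains in the generally non-free monoid $C_0$; the fix, as you indicate, is to take the transitions of $UC$ to be triples $(a,h,b)$ with $a,b \in \N[|C_0|]$ and $h \maps \epsilon(a) \to \epsilon(b)$ in $C$, where $\epsilon$ is the counit of $\N \dashv |{-}|$, and with that choice your hom-set comparison does hold on the nose --- this is precisely how the right adjoint is built in the cited reference. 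The trade-off is clear: the paper's proof is short and inherits naturality, the triangle identities, and functoriality of $F$ from the general result, while yours is self-contained and makes $U$ explicit, at the price of the routine but real verifications you defer (chiefly that the generators-and-relations construction of $FP$ actually has the universal property you invoke, which is what ``$FP$ is free'' means, plus functoriality of $F$). Your alternative factorization through $\mathsf{Graph}(\CMon)$ followed by the free internal category functor also works, using $\Cat(\CMon) \cong \CMon(\Cat) = \CMC$, but note that the existence of the second left adjoint should be obtained from local presentability (or an adjoint functor theorem) rather than the naive path-coproduct formula, since coproducts in $\CMon$ are not stable under pullback.
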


\begin{proof}
    This is a special case of \cite[Thm.\ 5.1]{GeneralizedPetriNets} which shows that there is similar adjunction for any Lawvere theory $\mathsf{Q}$. When $\mathsf{Q}$ is the Lawvere theory for commutative monoids this theorem gives the desired adjunction. The above Theorem also contains a description of this left adjoint in more detail. 
\end{proof}

\noindent Note that many authors prefer to generate symmetric monoidal categories from Petri nets which are \emph{not} strictly commutative. Working out the details of this ask has many subtleties (e.g. in \cite{SassoneAxiom} to turn their categorical semantics functor into an adjunction they must quotient by natural transformations consisting only of symmetries). However, there are other options. In \cite{GeneralizedPetriNets}, we argue that Pre-nets, a non-commutative variant of Petri nets, more naturally generate non-commutative symmetric monoidal categories. In \cite{baez2021categories} we introduced $\Sigma$-nets, a Petri net variant where the user must specify which permutations alter a transition and which do not. This property makes them a middle ground betweeen Petri nets and Pre-nets: in a Pre-net every permutation alters its transition and in a Petri net none of them do. $\Sigma$-nets also admit a natural adjunction into strict symmetric monoidal categories which are not commutative.
\section{Colored Petri Nets}\label{sec:coloredpetri}

Jensen defined \emph{colored Petri nets} \cite{jensen}, and Lakos rephrased this definition as follows \cite{lakosabstraction}. Note that some sources refer to the following as \emph{colored Petri nets with guards}. 

\begin{defn}[\cite{lakosabstraction} Def.\ 4.8]\label{ogcnet}
    An (unmarked) \define{colored Petri net with guards} is a tuple \[(P, T, A, C, E)\] where
    \begin{itemize}
        \item $P$ is a set of \define{places}
        \item $T$ is a set of \define{transitions}
        \item $A \subseteq P \times T \cup T \times P$ is a set of \define{arcs}
        \item $C$ is an assignment of each place and transition to a color set i.e. a mapping 
        \[C \maps P + T \to \Set\]
        \item $E$ is the set of \define{arc inscriptions}. This is a set of functions
        \[E(a)\maps C(t) \to \N[C(p)] \]
        for every arc $a=(p, t)$ or $a=(t, p)$ in $A$
        \end{itemize}
\end{defn}
\noindent By this definition, a colored Petri net is a Petri net with a set of colors associated to each place, a set of colors associated to each transition, and functions between the free commutative monoids on each arc. A few remarks:

\begin{itemize} 
    \item the underlying Petri net $N=(P, T, A)$ of the above defintion can be interpreted as a Petri net in the sense of Definition \ref{def:Petri}. $N$ can be thought of as a pair of functions
    \[ \begin{tikzcd}T \ar[r, shift left=.5ex, "s"] \ar[r, shift right=.5ex, "t", swap] & \N[P] \end{tikzcd}\]
    where $s$ and $t$ send a transition $\tau$ to the sum of all places which are connected to $\tau$ by an input arc and output arc respectively.
    \item The guards are defined implicitly. Transitions which do not fire unless a condition is met are represented by arc inscriptions which produce colorings on the transitions which have no capability of leaving along an output arc inscription.
\end{itemize}
Colored Petri nets with guards can be interpreted as an assignment of syntax to semantics. The semantics will be the following double category which is a special case of the double category of spans $\Span(C)$ for a category $C$ with pullbacks (e.g. \cite[\S 3]{dawson2010span}).
\begin{prop}
Let $\Span(\CMon)$ denote the double category where 
\begin{itemize}
\item objects are sets,
\item horizontal 1-cells are spans in $\CMon$\[\begin{tikzcd}
    B &\ar[l]  A \ar[r] & C
\end{tikzcd}\]
\item horizontal composition is given by pullback in $\CMon$,
\item vertical 1-cells are monoid homomorphisms,
\item vertical 2-cells are commuting squares
\[\begin{tikzcd}
    B \ar[d,"f"] &\ar[l]  A\ar[d,"h"] \ar[r] &\ C \ar[d,"g"]\\
    B' & \ar[l] A' \ar[r] & C'
\end{tikzcd}\]
\item and vertical composition is given by composition of homomorphisms.
\end{itemize}
\end{prop}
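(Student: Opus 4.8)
The plan is to recognize this structure as the instance $\Span(C)$ of the general span double-category construction specialized to $C = \CMon$, and then to verify the single hypothesis that the construction requires, namely that $\CMon$ has pullbacks. Since the fact that $\Span(C)$ is a (pseudo) double category for any category $C$ with pullbacks is already available in the cited literature, almost all of the bookkeeping — associativity and unitality of horizontal composition, the interchange law, and the coherence isomorphisms — can be imported wholesale once the hypothesis is checked. So the substantive step reduces to the existence of pullbacks, or equivalently of finite limits, in $\CMon$.

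First I would establish that $\CMon$ is complete. Commutative monoids are the models of a Lawvere theory (equivalently, the algebras for the finitary free-commutative-monoid monad $\N \maps \Set \to \Set$ introduced above), and such a category of algebras is complete, with limits created by the forgetful functor to $\Set$. Concretely, the pullback of a cospan $A \xrightarrow{p} C \xleftarrow{q} A'$ is the submonoid $\{(a,a') \in A \times A' : p(a) = q(a')\}$ of the product, equipped with pointwise operations; its universal property is inherited from $\Set$ because the induced comparison maps are automatically homomorphisms. This yields the horizontal composite of the spans $B \leftarrow A \to C$ and $C \leftarrow A' \to D$ as the span $B \leftarrow A \times_C A' \to D$, and the horizontal identity on an object $B$ as the span $B = B = B$.

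Next I would assemble the remaining double-category data. Vertical composition is just composition in $\CMon$, which is strictly associative and unital, so the vertical $1$-category is immediate. A vertical $2$-cell is exactly a commuting diagram of the displayed shape; vertical composition stacks two such diagrams by composing the homomorphisms, while horizontal composition of $2$-cells uses the universal property of the pullback to induce the required homomorphism between the two apex monoids. The interchange law then follows from the uniqueness clause of that universal property: both iterated composites are maps into a pullback that agree after postcomposition with the two projections, hence coincide.

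The main obstacle — and really the only place demanding care — is that pullbacks are determined only up to canonical isomorphism, so horizontal composition is associative and unital only up to coherent isomorphism; the resulting structure is thus a pseudo double category rather than a strict one. I would fix a choice of pullbacks and read off the associator and unitors from the comparison isomorphisms between iterated pullbacks, verifying that they satisfy the pentagon and triangle coherence conditions. These coherences, together with the interchange law, are precisely what the general $\Span(C)$ result packages, so invoking it lets me avoid grinding through the diagrams by hand; the only genuinely $\CMon$-specific input is the completeness established above.
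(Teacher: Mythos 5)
Your proposal is correct and follows essentially the same route as the paper: the paper simply observes that this is a special case of the double category of spans $\Span(C)$ for a category $C$ with pullbacks (citing the literature for that general construction), with the existence of pullbacks in $\CMon$ being the only hypothesis to check. Your additional details — that limits in $\CMon$ are created by the forgetful functor to $\Set$ since commutative monoids are algebras for a Lawvere theory, and that the structure is a pseudo double category because pullbacks are only defined up to canonical isomorphism — are correct elaborations of what the paper leaves implicit.
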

\begin{prop}
$\Span(\CMon)$ may be equipped with the structure of a symmetric monoidal double category (e.g. \ref{defn:symmetric_monoidal_double_category}) $(
\Span(\CMon),\oplus)$ where
\begin{itemize}
    \item For objects $X,Y$ we have $X \oplus Y$ as their biproduct,
    \item For homomorphisms $f: X \to Y, g: X' \to Y'$, $f \oplus f' : X \oplus  X' \to Y \oplus Y'$ is their pairing.
    \item on horizontal 1-cells we have
    \[\begin{tikzcd} X & \ar[l,"l",swap]  A  \ar[r,"r"] & Y & \oplus & X' & \ar[l,"l'",swap]  A' \ar[r,"r'"] & Y'
\end{tikzcd}\]

\[ \begin{tikzcd}
{X \oplus X'}	& \ar[l,"{l\oplus l'}",swap] A \oplus A' \ar[r,"{r \oplus r'}"] & {Y \oplus Y'}
\end{tikzcd}\]
 \item and the action of plus on 2-cells is given by
 \[\begin{tikzcd}
	X & A & Y \\
	Q & B & R
	\arrow["{f}"', from=1-1, to=2-1]
	\arrow["l"', from=1-2, to=1-1]
	\arrow["r", from=1-2, to=1-3]
	\arrow["{s}", from=2-2, to=2-1]
	\arrow["{t}"', from=2-2, to=2-3]
	\arrow["{h}", from=1-3, to=2-3]
	\arrow["g", from=1-2, to=2-2]
\end{tikzcd} \oplus \begin{tikzcd}
	{X'} & {A'} & {Y'} \\
	{Q'} & {B'} & {R'}
	\arrow["{f'}"', from=1-1, to=2-1]
	\arrow["{l'}"', from=1-2, to=1-1]
	\arrow["{r'}", from=1-2, to=1-3]
	\arrow["{s'}", from=2-2, to=2-1]
	\arrow["{t'}"', from=2-2, to=2-3]
	\arrow["{h'}", from=1-3, to=2-3]
	\arrow["{g'}", from=1-2, to=2-2]
\end{tikzcd}\] 
\[=\begin{tikzcd}
	{X\oplus X'} && {A\oplus A'} && {Y\oplus Y'} \\
	{Q\oplus Q'} && {B\oplus B'} && {R\oplus R'}
	\arrow["{l \oplus l'}"', from=1-3, to=1-1]
	\arrow["{r \oplus r'}", from=1-3, to=1-5]
	\arrow["{s \oplus s'}", from=2-3, to=2-1]
	\arrow["{t \oplus t'}"', from=2-3, to=2-5]
	\arrow["{g\oplus g'}", from=1-3, to=2-3]
	\arrow["{h\oplus h'}", from=1-5, to=2-5]
	\arrow["{f\oplus f'}"', from=1-1, to=2-1]
\end{tikzcd}\]
\end{itemize}
\end{prop}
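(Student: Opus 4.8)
The plan is to exhibit the claimed structure as an instance of a general principle: for any category $C$ with pullbacks carrying a symmetric monoidal structure whose tensor functor $C \times C \to C$ preserves pullbacks, the double category $\Span(C)$ inherits the structure of a symmetric monoidal double category. Since the statement already fixes the data of $\oplus$ on objects, vertical $1$-cells, horizontal $1$-cells, and $2$-cells, the remaining work is to produce the associativity, unit, and symmetry constraints and to verify the coherence axioms of Definition \ref{defn:symmetric_monoidal_double_category}.

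First I would record that in $\CMon$ the biproduct $\oplus$ is simultaneously the categorical product and coproduct; in particular it endows $\CMon$ with a cartesian symmetric monoidal structure whose unit is the zero monoid. Because $\oplus$ agrees with the product, the functor $\oplus \maps \CMon \times \CMon \to \CMon$ is right adjoint to the diagonal $\Delta \maps \CMon \to \CMon \times \CMon$, and hence preserves all limits, in particular pullbacks. Concretely this yields a canonical isomorphism
\[
(A \times_B C) \oplus (A' \times_{B'} C') \;\cong\; (A \oplus A') \times_{B \oplus B'} (C \oplus C'),
\]
natural in all variables, realized by the evident shuffle of components.

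This pullback-preservation is precisely what promotes $\oplus$ to a double functor $\Span(\CMon) \times \Span(\CMon) \to \Span(\CMon)$: the assignment is visibly functorial on vertical $1$-cells and on $2$-cells (these are just homomorphisms and commuting squares, and $\oplus$ of commuting squares commutes), while the displayed isomorphism supplies the comparison globular $2$-cell witnessing that $\oplus$ commutes with horizontal composition of spans, and identity spans are carried to identity spans up to the obvious isomorphism. The associator, unitors, and braiding are then induced levelwise from the coherence isomorphisms of the cartesian structure on $\CMon$: on objects they are the product associativity, unit, and swap maps, and on horizontal $1$-cells they are obtained by applying $\oplus$ together with these maps to the apex and legs of the spans. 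Since all constraints are invertible globular $2$-cells, $(\Span(\CMon),\oplus)$ is in fact a \emph{strong} monoidal double category. The pentagon, triangle, and hexagon identities hold because they hold apex-wise for the symmetric monoidal — indeed cartesian — structure of $\CMon$, and the two interchange axioms relating $\oplus$ to the two kinds of composition follow from naturality of these coherence isomorphisms together with the preservation isomorphism above.

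The main obstacle is the verification that $\oplus$ is a genuine (pseudo) double functor, that is, that it respects horizontal composition coherently: one must check that the comparison isomorphism coming from pullback-preservation is natural and satisfies the associativity and unit constraints of a double functor. All remaining data and axioms are inherited directly from the ambient symmetric monoidal structure of $\CMon$ and are routine, so I would state them and indicate that their verification is the standard coherence bookkeeping rather than carry it out in full.
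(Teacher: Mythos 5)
Your proposal is correct, but it takes a genuinely different route from the paper. The paper's proof is essentially a citation: it invokes \cite[Ex.\,9.2]{FramedBicats}, which equips $\Span(C)$ with a symmetric cartesian monoidal double category structure whenever $C$ has finite limits, and then observes that $\oplus$ on $\CMon$, being a biproduct, is in particular a cartesian product, so the result instantiates at $C = \CMon$. You instead re-derive the content of that cited example by hand: your key lemma is that $\oplus \colon \CMon \times \CMon \to \CMon$, being right adjoint to the diagonal, preserves pullbacks, which supplies the comparison isomorphism making $\oplus$ a (pseudo) double functor on $\Span(\CMon)$, after which the associator, unitors, and braiding lift levelwise from the cartesian coherence isomorphisms of $\CMon$. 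Both arguments ultimately rest on the same observation (biproduct $=$ product, and $\CMon$ has pullbacks); the paper's version buys brevity and rigor-by-reference, while yours buys self-containedness and a mildly more general principle (any pullback-preserving tensor on a category with pullbacks induces a monoidal double structure on spans; cartesianness is not actually needed). One terminological slip worth fixing: the associator, unitors, and braiding are invertible \emph{vertical transformations}, whose components at horizontal 1-cells are in general non-globular 2-morphisms (their vertical source and target are the object-level coherence isomorphisms, not identities); the invertible \emph{globular} cells are the comparison constraints $\chi$ and $\mu$ of the double functor $\oplus$. This does not affect the correctness of your argument, only how the "strong rather than lax" conclusion should be phrased.
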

\begin{proof}
\cite[Ex.\,9.2]{FramedBicats} gives a symmetric cartesian monoidal structure $(\Span(C),\times)$ whenever $C$ has finite limits. We instantiate this in the case when $C = \CMon$. Note that $+$ is a biproduct on $\CMon$ which is in particular a cartesian product.
\end{proof}
\begin{prop}\label{correspond}
    A colored petri net with guards $(P, T, A, C, E)$ defines a symmetric monoidal strong double functor
    \[
        K \maps FN \to (\mathsf{Span} (\CMon), +) 
    \]
    where $FN$ is the free commutative monoidal category on the underlying Petri net \[N = \begin{tikzcd} T \ar[r, shift left = .5ex, "s"] \ar[r, shift right = .5ex, "t", swap] & \N[P] \end{tikzcd}.\]
\end{prop}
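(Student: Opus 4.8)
The plan is to exploit the fact that $FN = FN$ is the \emph{free} commutative monoidal category on $N$, so that a structure-preserving functor out of it is determined by its values on the generating objects (places) and generating morphisms (transitions). Since $FN$ is upgraded to a double category with only trivial vertical $1$-cells, giving the double functor $K$ is the same as giving a strong symmetric monoidal functor from the commutative monoidal category $FN$ into the symmetric monoidal horizontal bicategory underlying $(\Span(\CMon),+)$: the vertical identities and their $2$-cells are then forced onto vertical identities. I would therefore define $K$ on generators, extend it using the monoidal product and horizontal composition, and defer coherence to the end.

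On objects I send each place $p$ to its color set $C(p)$ and extend strictly monoidally, so a marking $x=\sum_p n_p\,p$ goes to the biproduct $\bigoplus_p n_p\, C(p)$; this is forced by compatibility of $+$ with $\oplus$. On horizontal $1$-cells I send each transition $\tau \maps s(\tau)\to t(\tau)$ to the span of commutative monoids
\[ \N[C(s(\tau))] \leftarrow \N[C(\tau)] \rightarrow \N[C(t(\tau))] \]
whose apex is the free commutative monoid on the color set $C(\tau)$ and whose two legs are the $\CMon$-homomorphisms induced by the arc inscriptions: the left leg is obtained, via the universal property of the biproduct, from the Kleisli (linear) extensions $\N[C(\tau)] \to \N[C(p)]$ of the inscriptions $E(p,\tau)\maps C(\tau)\to\N[C(p)]$ ranging over input arcs $(p,\tau)\in A$, and symmetrically on output arcs for the right leg. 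A general firing of $FN$ is a formal composite of transitions and identities under sequential and parallel composition; I send sequential composites to horizontal composites of spans (pullback in $\CMon$) and parallel composites to $\oplus$ of spans, exactly the structure recorded in the two preceding propositions.

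It then remains to check that this assignment is well defined and assembles into a symmetric monoidal strong double functor, and here the comparison $2$-cells come for free from universal properties in the target. Horizontal composition of spans is a pullback, so the canonical pullback comparison isomorphisms supply \emph{invertible} globular $2$-cells $K(\sigma)\odot K(\tau)\xrightarrow{\sim}K(\sigma\circ\tau)$ (this is what makes $K$ strong rather than merely lax); the unit comparison cells come from the identity spans; and the monoidal comparison isomorphisms $K(x)\oplus K(y)\xrightarrow{\sim}K(x+y)$ and $K(\tau)\oplus K(\tau')\xrightarrow{\sim}K(\tau+\tau')$ come from the universal property of the biproduct $\oplus$. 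I would then verify that these chosen isomorphisms satisfy the associativity and unit axioms of a strong double functor together with the hexagon and symmetry axioms witnessing compatibility of the two symmetric monoidal structures.

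The main obstacle is establishing well-definedness and coherence simultaneously. The category $FN$ is \emph{strictly} commutative monoidal: its symmetries are identities and its product is strictly associative and unital, whereas in $\Span(\CMon)$ all of these hold only up to the canonical pullback and biproduct isomorphisms. The crux is to show that every equation of firing patterns imposed in the free commutative monoidal category is carried to a genuine identity of spans-up-to-comparison in $\Span(\CMon)$ -- concretely, that the strict symmetries of $FN$ land on the canonical (non-identity) braidings of the biproduct and that the pentagon and unit diagrams built from the comparison cells commute. This is exactly where strong, rather than strict, double functors are indispensable, and it is what makes the construction avoid the decategorified formulation: because each transition is assigned an actual span and each relation an actual coherence isomorphism, the required diagrams can be checked directly from the universal properties of pullbacks and biproducts in $\CMon$, while freeness of $FN$ guarantees that checking them on the generating places and transitions suffices.
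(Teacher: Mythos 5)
Your proposal takes essentially the same approach as the paper's proof: exploit freeness of $FN$ to define $K$ on generators, send each transition $\tau$ to the span with apex $\N[C(\tau)]$ whose legs pair the (freely extended) arc inscriptions over the input and output places, and obtain the composition and monoidal comparison cells from the canonical pullback, associator, and symmetry isomorphisms of $\Span(\CMon)$ after fixing once-and-for-all choices --- exactly the ``one subtlety'' the paper addresses. The only correction needed is at the level of objects: a place $p$ must be sent to the free commutative monoid $\N[C(p)]$ rather than to the bare color set $C(p)$, since otherwise the object assignment does not land in $\CMon$ and fails to match the feet $\N[C(s(\tau))]$ and $\N[C(t(\tau))]$ of the spans you assign to transitions (a mismatch that would violate the source/target compatibility $S\circ K_1 = K_0\circ S$ of a double functor); with that fix, which your own Kleisli-extension description already presupposes, your construction coincides with the paper's.
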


\begin{proof}
    Because $FN$ is freely generated both monoidally and compositionally, a functor out of $FN$ which preserves those two operations is uniquely defined by its assignment on generators. A place $p \in P$ is assigned to the free commutative monoid $\N[C(p)]$ and a transition $\tau \maps s(\tau) \to t(\tau) \in T$ is assigned to the span 
    \[\begin{tikzcd}
        &
        \N[C(\tau)]
        \ar[dl, "({E(x, t)})_{x \in s(\tau)}", swap] 
        \ar[dr, "({E(t, y)})_{y \in t(\tau)}"] 
        &\\
        \bigoplus_{x \in s(\tau)} \N[C(x)] 
        && 
        \bigoplus_{y \in t(\tau)} \N[C(y)]
    \end{tikzcd}\]
    Where $({E(t, y)})_{y \in t(\tau)}$ denotes the pairing of the arc inscriptions in the input of $\tau$ and $({E(t, y)})_{y \in t(\tau)}$ denotes the pairing of the arcs in the output of $\tau$. The rest of the objects and morphisms in $FN$ are freely generated by the places and transitions of $N$ so their assignment under $K$ is given by composites and direct sums of the above assignments. There is just one subtlety, $(FP, +)$ is strictly monoidal and commutative whereas $(\Span(\CMon), \oplus)$ only satisfies those properties up to coherent isomorphism. Luckily, the functor $K$ does not need to be strictly monoidal. This means that instead of having $K(a+b) = K(a) \oplus K(b)$, we have a coherent isomorphism \[ 
        \gamma_{a, b} \maps K(a) \oplus K(b) \cong K(a+b). 
    \]
    After choosing an initial association and permutation for the image of every sum in $FN$, this isomorphism will consist of whatever symmetries or associators in $\mathsf{Span}(\CMon)$ are necessary to preserve the commutativity and associativity equations in $FN$. Similarly, the composition comparison 
    \[\phi_{\tau,\sigma} : K (\tau) \circ K(\sigma) \cong K( \tau \circ \sigma)\]
    consists of the right symmetries and associators for pullback which turn the arbitary choices of associations and units into each other.
\end{proof}
\noindent To understand this characterization, it is helpful to look at an example.
\begin{examples}\label{colornet}
The following colored Petri net models the operation of a vending machine which dispense candy bars for $25 \cent$ and apples for $75 \cent$:
    \[
\begin{tikzpicture}
        \node (ca) at (0.5,0.5) {$\{\text{apple},\text{bar} \}$};
        \node (cb) at (0.5,-0.5) {$\{25\cent,50\cent\}$};
        \node (cc) at (-5.5,0) {$\{25 \cent,50\cent\}$};
        \node [style=place] (C) at (-1, .5) {};
        \node [style=place] (B) at (-1, -0.5) {};
        \node [style=place] (A) at (-4, 0) {};
        \node [style=transition] (tau1) at (-2.5, 0) {$\text{buy}$}
        edge [pre] (A)
        edge [post] (B)
        edge [post] (C);
\end{tikzpicture}\]
There are three species representing coins going in, food products going out, and coins going out. For each place, its set of colors is indicated next to it. The set of colors and their arc inscriptions are indicated as follows:
\begin{align*}
   25 \cent + 50 \cent & \leftmapsto  \text{a1}\mapsto (\text{apple},0)\\
  2 \cdot 50 \cent  & \leftmapsto \text{a2} \mapsto   (\text{apple},50\cent) \\
    3 \cdot 25 \cent  & \leftmapsto \text{a3} \mapsto   (\text{apple},0) \\
      25 \cent  & \leftmapsto \text{b1} \mapsto   (\text{bar},0) \\
    50 \cent  & \leftmapsto \text{b2} \mapsto   (\text{bar},25 \cent) \\
    25 \cent & \leftmapsto \text{e1} \mapsto (0,25 \cent)\\
        50 \cent & \leftmapsto \text{e2} \mapsto (0,50 \cent)
\end{align*}
The middle column gives the colors of the transition $\text{buy}$ and the columns to its left and right give the arc inscriptions for these colors on the input and output places respectively. Note that this table can be read as a span of free commutative monoids
\[\begin{tikzcd}\N[\{25 \cent, 50 \cent\} & \ar[l] \N[C
(\text{buy})] \ar[r] & \N[25 \cent, 50\cent] \oplus \N[\text{apple},\text{bar}] \end{tikzcd}\]
where $C(\text{buy})$ is the set whose elements are given by the middle column above. Let $P$ be the underlying Petri net.The above data defines a symmetric monoidal double functor $K \maps FP \to \Span(\CMon)$. The assignment on generators of $FP$ is shown above, and this assignment is freely extended to monoidal products and composites.
\end{examples}
One advantage of phrasing colored Petri nets in categorical terms is that it gives a natural definition of morphism between colored Petri nets:

\begin{defn}\label{morphism}
    Let $K \maps FP \to \Span(\CMon)$ and $K' \maps FP' \to \Span(\CMon)$ be colored Petri nets. Then a \define{morphism of colored Petri nets} $(f, g, \alpha)$ is a morphism of Petri nets $(f, g) \maps P \to P'$ and a monoidal vertical transformation $\alpha$ of the following shape.
    \[
    \begin{tikzcd}
        FP 
        \ar[dd, "{F(f, g)}", swap] 
        \ar[dr, "K"{name=U}, pos=0.2]
        &\\&  
        \Span(\CMon)
        \\
        FP'
        \ar[ur, "K'"{name=D}, swap, pos=0.15]
        & 
        \arrow[Rightarrow, from = U, to = D, shorten <=3ex, shorten >=3ex, "\alpha", swap]
    \end{tikzcd}
    \]
\end{defn}
\noindent Because this vertical transformation is monoidal, it suffices to define it on the generators of $FN$, i.e.\ the places and transitions of $N$ and not arbitrary sums of those places and transitions. Therefore, a vertical transformation as above consists of the following:
\begin{itemize}
    \item For each place $p$ a function between the color sets $\alpha_p \maps K(p) \to K'(g(p)) $. Because $FP$ is made into a double category trivially, the naturality square at this level is trivial.
    \item For each transition $\tau$, a morphism of spans i.e. a commuting diagram of the form
    \[
    \begin{tikzcd}
    K(s(\tau))\ar[d] & \ar[d]K(\tau) \ar[l] \ar[r] &\ar[d] K(t(\tau)) \\
    K'(s'(f(\tau)))) & K'(f(\tau)) \ar[l] \ar[r] & K'(t'(f(\tau)))
    \end{tikzcd}
    \]
    Because $FP$ has trivial vertical 2-cells, the naturality square at this level is also trivial.
    \item vertical transformations between double categories are required to respect composition and identities. Respecting identities is the condition that the morphism between identity spans
 \[
     \begin{tikzcd}
    K(x)\ar[d, "\alpha_x"] & \ar[d]K(x) \ar[l, equal] \ar[r, equal] &\ar[d, "\alpha_x"] K(x) \\
    K'(g(x)) & K'(g(x)) \ar[l, equal] \ar[r, equal] & K'(g(x))
    \end{tikzcd}
    \]
    has all three components given by $\alpha_x$. Respecting composition boils down to the requiring that $\alpha$ is built by extending its value on generators. For composable morphisms $\tau \maps x \to y$ and $\kappa \maps y \to z$ in $FN$, $\alpha$ gives the morphism of spans
    \[
    \begin{tikzcd}
    K(x)\ar[d, "\alpha_x"] & \ar[d, "\alpha_{K(\kappa \circ \tau)}"]K(\tau) \times_{K(y)} K(\kappa) \ar[l] \ar[r] &\ar[d, "\alpha_z"] K(z) \\
    K'(g(x)) & K'(f(\tau)) \times_{K'(g(y))} K'(f(\kappa)) \ar[l] \ar[r] & K'(g(z))
    \end{tikzcd}
    \]
    respecting composition requires that the arrow $\alpha_{K (\kappa \circ \tau))}$ is equal to the pullback of arrows $\alpha_{K(\tau)} \times_{\alpha_{K(y)}} \alpha_{K(\kappa)}$.
\end{itemize}

In \cite{lakosabstraction}, Lakos also defined morphisms of colored Petri nets. Roughly, a morphism from a colored Petri net $K=(P, T, A, C, E)$ to a colored Petri net $K'=(P', T', A', C', E')$ is a function between each set which can optionally satisfy the following properties:
\begin{itemize}
    \item {\bf Structure Respecting:} A morphism of colored Petri nets is structure respecting if it preserves arcs. This means that if an arc $a$ of $K$ connects a transition $t$ and a place $p$, then the image of this arc should connect the image of $t$ to the image of $p$. The above definition is structure respecting because it contains a morphism of the underlying Petri nets
    \item {\bf Color Respecting:} A morphism is color respecting if it sends the color on a place or a transition to a subset of the colors on the image of that place or transition. Our definition weakens this definition by requiring that there is an arbitrary monoid homomorphism between the two relevant color sets rather than an inclusion. A color respecting morphism must also preserve arc inscriptions. This is reflected in our definition by that fact that the components of $\alpha$ on the transitions are commuting diagrams of spans.
\end{itemize}
Two morphisms of colored Petri nets can be composed by composing each component separately. This forms the structure of a category.
\begin{defn}
Let \define{$\cPetri$} be the category of colored Petri nets and their morphisms.
\end{defn}

\section{Operational Semantics for Colored Petri Nets}\label{sec:unfolding}
Given a colored Petri net, there is a way to associate a category whose morphisms represent executions of your colored Petri net. To make this precise we will use the notion of step sequence. In \cite{lakosabstraction} these concepts are defined, here we adapt these definitions to our purposes:
\begin{defn}
Let $P$ be the petri net $\begin{tikzcd}T \ar[r,shift left=.5ex] \ar[r,shift right=.5ex] & \N[S] \end{tikzcd}$.
A \define{marking} of a colored Petri Net $K \maps FP \to \Span(\CMon)$ is a pair $(m,x)$ where $m$ is a marking of the Petri $P$ and $x$ is an element of $K(m)$. A \define{step} of $K$ with \define{source} $(m,x)$ and \define{target} $(n,y)$ is a tuple $(\tau,f)$ where
\begin{itemize}
    \item $\tau$ is an element of $\N[T]$ with source given by $m$ and target given by $n$ and,
    \item $f$ is an element of the apex of $K(\tau)$ whose image under the left and right legs of $K(\tau)$ are given by $x$ and $y$ respectively.
\end{itemize}
A \define{step sequence} is a sequence of composable steps $\{(\tau_i,f_i)\}_{i=1}^{k}$. Composable means that the target of $(\tau_i,f_i)$ is equal to the source of $(\tau_{i+1},f_{i+1})$ for all $1<i<k$. In particular, setting $k=0$ gives a step sequence from every marking to itself. The source of a step sequence is defined to be the source of its first step and the target of a step sequence is the target of its last step.
\end{defn}\noindent Our categorical operational semantics will use the Grothendieck construction for displayed categories. 

\subsection{Displayed Categories}

\begin{defn}
    A \define{displayed category} over $C$ is a lax double functor 
    \[ F \maps C \to \Span(\Set)\]
\end{defn}
\noindent Displayed categories have been studied by many authors. For example, in Jean B\'enabou's lecture notes \cite{benabounotes}. The term displayed categories was first coined in  \cite{displayed}. 
The construction of $\int D$ is a generalization of the Grothendieck construction \cite{SGAI}.

\begin{defn}\label{totalcat}
For a displayed category $F \maps C \to \Span(\Set)$ its total category $\int F$ is a category where:
\begin{itemize}
    \item An object is a tuple $(c,x)$ where $c$ is an object of $C$ and $x \in F(c)$.
    \item Let $f \maps c \to d$ be a morphism in $C$ with image
    \[\begin{tikzcd}
     & F(f)\ar[dr,"b"] \ar[dl,"a",swap]& \\
    F(c) & & F(d) \end{tikzcd} \]
    then for every object $r \in F(f)$, there is a morphism $(f,r) \maps (c,a(r)) \to (d,b(r))$.
    \item For morphisms $(f,r) \maps (c,x) \to (d,y)$ and $(g,s) \maps (d,y) \to (e,z)$ their composite is defined to be $(g \circ f, t) \maps (c,x) \to (e,z)$ where $t$ is the image of the pair $(r,s)$ under the laxator of $F$
    \[\phi : F(f) \times_{F(d)} F(g) \to F(g \circ f). \]
    \end{itemize}
    $\int F$ is equipped with a functor 
    \[\Int F \to C \]
    which sends every object and morphism to its first component.
\end{defn}
\noindent Displayed categories form a category and the total category construction is a functor.
\begin{defn}
Let $F \maps C \to \Span(\Set)$ and $G \maps D \to \Span(\Set)$ be displayed categories. Then a \define{morphism of displayed categories} $(j,\alpha) \maps F \to G$ is a functor $j \maps C \to D$  and a vertical transformation between double functors filling the following diagram
\[
    \begin{tikzcd}
        C
        \ar[dd,"{j}",swap] 
        \ar[dr,"F"{name=U},pos=0.2]
        &\\&  
        \Span(\Set)
        \\
    D
        \ar[ur,"G"{name=D}, swap, pos=0.2]
        & 
        \arrow[Rightarrow, from = U, to = D, shorten <=3ex, shorten >=3ex, "\alpha", swap]
    \end{tikzcd}
    \]
 This defines a category $\define{\mathsf{Disp}}$ with composition defined as composition of vertical transformations. Let $\Cat^2$ be the category where objects are functors $D \to C$ and morphisms are commuting squares
\[\begin{tikzcd}
D \ar[d] \ar[r] & D' \ar[d]\\
C \ar[r] & C'
\end{tikzcd}
\]

\end{defn}
\noindent The proof of the following theorem is a corollary of \cite[Thm. 3.45]{cruttwell2022double} when restricted to case where the vertical 1-cells are all identities (see also the discussion in \cite[pg. 472]{pare2011yoneda}). 
\begin{thm}
The total category construction gives an equivalence 
\[ \int \maps \mathsf{Disp} \xrightarrow{\sim} \Cat^2.\]
\end{thm}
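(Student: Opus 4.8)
The plan is to prove the equivalence by exhibiting an explicit quasi-inverse $\Phi \maps \Cat^2 \to \Disp$ together with natural isomorphisms $\int \circ \Phi \cong \mathrm{Id}_{\Cat^2}$ and $\Phi \circ \int \cong \mathrm{Id}_{\Disp}$; this is the special case of \cite[Thm.\ 3.45]{cruttwell2022double} in which the domain double category $C$ carries only trivial vertical 1-cells, so I would also indicate how the general statement specializes. First I would make $\int$ precise on morphisms: a morphism $(j,\alpha) \maps F \to G$, with $F \maps C \to \Span(\Set)$ and $G \maps D \to \Span(\Set)$, is sent to the square whose bottom edge is $j$ and whose top edge $\int(j,\alpha) \maps \int F \to \int G$ acts by $(c,x) \mapsto (j(c), \alpha_c(x))$ on objects and $(f,r) \mapsto (j(f), \alpha_f(r))$ on morphisms, where $\alpha_f \maps F(f) \to G(j(f))$ is the component of $\alpha$ on $f$. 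Functoriality of the top edge is exactly the two coherence conditions on a vertical transformation spelled out after Definition \ref{morphism}, and the square commutes immediately from the first-projection description of the projection functors, since $\pi_G(j(c),\alpha_c(x)) = j(c) = j(\pi_F(c,x))$.

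Next I would build $\Phi$. Given a functor $p \maps E \to C$, define a lax double functor $\Phi(p) \maps C \to \Span(\Set)$ sending an object $c$ to the set of objects of $E$ in the fibre over $c$, and a morphism $f \maps c \to c'$ to the span whose apex is $\{r \in \Mor(E) : p(r) = f\}$ with legs given by source and target. The laxator $\Phi(p)(f) \times_{\Phi(p)(c')} \Phi(p)(f') \to \Phi(p)(f' \circ f)$ is composition in $E$, which lands correctly because $p$ is a functor, and the unitor sends an object to its identity morphism. A commuting square with top $\tilde F \maps E \to E'$ and bottom $j \maps C \to C'$ is sent to $(j,\alpha)$, where each component of $\alpha$ is the restriction of $\tilde F$ to fibres --- well defined precisely because the square commutes --- and the vertical-transformation axioms for $\alpha$ follow from functoriality of $\tilde F$.

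Then I would verify the two natural isomorphisms. For $\int \Phi(p) \cong (E \to C)$: an object of $\int \Phi(p)$ is a pair $(c,x)$ with $x$ an object of $E$ over $c$, hence determined by $x$, and a morphism is a pair $(f,r)$ with $p(r) = f$, hence determined by $r$; composition in $\int \Phi(p)$ is by construction composition in $E$, so $\int \Phi(p) \cong E$ over $C$. For $\Phi(\int F) \cong F$: the fibre of the first-projection functor $\int F \to C$ over $c$ is $\{(c,x) : x \in F(c)\} \cong F(c)$, the morphisms over $f$ are $\{(f,r) : r \in F(f)\} \cong F(f)$, and the source/target legs and the laxator transport to those of $F$. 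In each case I would then check naturality, which reduces to compatibility of these identifications with the first-projection and fibre-restriction descriptions of the two functors.

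The main obstacle I expect is bookkeeping of coherence data rather than any conceptual difficulty: the laxator of a displayed category is precisely what encodes composition in the total category, and the unitor encodes identities, so the heart of the argument is matching the lax-double-functor axioms with the functor axioms for $\int F \to C$, and matching vertical transformations with trivial vertical components against commuting squares. Keeping careful track of which spans, legs, laxators, and unitors correspond under the two isomorphisms is where all the care lies, and it is exactly this matching that the cited theorem packages once the vertical 1-cells of the domain are restricted to identities.
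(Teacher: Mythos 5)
Your proposal is correct, but it takes a genuinely different route from the paper. The paper gives no self-contained argument at all: it simply notes that the statement is a corollary of \cite[Thm.\ 3.45]{cruttwell2022double} restricted to the case where all vertical 1-cells are identities (with a pointer to the discussion in \cite{pare2011yoneda}). You instead construct an explicit quasi-inverse $\Phi$ --- fibres of objects as the value on objects, fibres of morphisms as span apexes, laxator given by composition in the total category, unitor by identities --- and verify the two natural isomorphisms directly; your key identifications (composition in $\int F$ \emph{is} the laxator of $F$, morphisms of $\int F$ over $f$ \emph{are} elements of the apex of $F(f)$, vertical transformations with trivial naturality squares \emph{are} fibrewise functors) are all accurate, and your checks of functoriality of $\int(j,\alpha)$ via the composition and identity comparison axioms are exactly the right ones. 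What each approach buys: the paper's citation is short and delegates all coherence bookkeeping to a general theorem about the double-categorical Grothendieck construction, whereas your argument is elementary and self-contained, confirms that the paper's specific conventions for $\mathsf{Disp}$ (triangles filled by vertical transformations) really do match commuting squares in $\Cat^2$, and produces the concrete formulas that the paper implicitly relies on later, e.g.\ in Proposition \ref{soundness} and Theorem \ref{unfold}. The only things left to discharge in your sketch are routine: naturality of the two isomorphisms and the lax-functor coherence axioms for $\Phi(p)$, both of which reduce to associativity and unitality of composition in $E$, as you indicate.
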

\begin{prop}
    There is a symmetric monoidal double functor 
    \[ \U : \Span(\CMon) \to \Span\]
    given by extending the forgetful functor $U : \CMon \to \Set$ to the structure of $\Span(\CMon)$. In detail, 
    \begin{itemize}
        \item  a commutative monoid $X$ is sent to its underlying set $UX$,
        \item spans, vertical 1-cells, and 2-cells are sent to their pointwise application by $U$
    \end{itemize}
\end{prop}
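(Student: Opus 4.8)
The plan is to realize $\U$ as the image of the forgetful functor $U \maps \CMon \to \Set$ under the $\Span(-)$ construction, and to extract every piece of required structure from a single fact: since $U$ is right adjoint to the free commutative monoid functor $\N[-]$, it preserves all limits, and in particular pullbacks and finite products. This is the engine that makes all the comparison maps below into isomorphisms.

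First I would record that $\Span(-)$ is functorial on pullback-preserving functors and use this to \emph{define} $\U$. On cells, $\U$ sends a commutative monoid $X$ to $UX$, a homomorphism to its underlying function, a span $B \leftarrow A \to C$ to $UB \leftarrow UA \to UC$, and a $2$-cell (commuting square of monoids) to its pointwise image under $U$. Vertical composition and vertical identities are preserved on the nose because $U$ is an ordinary functor, so the only genuine structure to supply is the globular comparison for horizontal composition. Horizontal composition in $\Span(\CMon)$ is computed by pullback, so for composable spans with apexes $A, A'$ over a shared monoid $M$ the comparison is the canonical map $U(A \times_M A') \to UA \times_{UM} UA'$; because $U$ preserves pullbacks this map is an isomorphism, so $\U$ is a \emph{strong} double functor. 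The horizontal unit comparison is in fact the identity, since applying $U$ to the identity span $X \leftarrow X \to X$ yields exactly the identity span on $UX$. The associativity and unit coherences for these comparisons then follow from the universal property of pullbacks together with $U$'s preservation of them.

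Next I would install the symmetric monoidal structure. The tensor $\oplus$ on $\Span(\CMon)$ is induced by the biproduct of commutative monoids, whose underlying set is the cartesian product of the underlying sets; since $U$ preserves finite products, the canonical comparison $UX \times UY \to U(X \oplus Y)$ is an isomorphism, and the unit comparison relating the one-element set to $U$ of the trivial monoid is likewise an isomorphism. These assemble into a strong monoidal structure on $\U$. Naturality in both the vertical and horizontal directions, together with the pentagon, triangle, and symmetry coherence axioms, all follow because every comparison is built from the product-preservation isomorphisms of $U$, so each axiom for $\U$ is the $U$-image of the corresponding axiom in $\Span(\CMon)$ transported along these isomorphisms; the braiding is preserved because $U$, being a functor, carries the symmetry of $\oplus$ to that of $\times$.

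There is no single hard obstacle here; the real content is packaged in the observation that $\Span(-)$ extends to a $2$-functor sending finite-limit-preserving functors between categories with finite limits to symmetric monoidal strong double functors, where the symmetric monoidal structure on each $\Span(C)$ is exactly the cartesian one of the previous proposition, following \cite[Ex.\,9.2]{FramedBicats}. I would therefore present the proof as an instantiation of this $2$-functoriality at $U \maps \CMon \to \Set$, the only net verification being the adjointness $\N[-] \dashv U$ that guarantees $U$ preserves the finite limits on which every comparison depends. The one subtlety worth flagging explicitly is that these comparisons are genuine isomorphisms rather than mere maps, which is precisely what upgrades $\U$ from a lax to a symmetric monoidal \emph{strong} double functor, as claimed.
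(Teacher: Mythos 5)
Your proposal is correct and takes essentially the same route as the paper, whose entire proof is the one-line observation that the $\Span$ construction is functorial and may be applied to $U \maps \CMon \to \Set$. Your added point---that $U$, as a right adjoint to $\N[-]$, preserves pullbacks and finite products, which is exactly what makes the composition and monoidal comparisons isomorphisms rather than mere maps---supplies the verification that the paper's one-liner leaves implicit.
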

\begin{proof}
The $\Span$ construction is a functor $\Span : \Cat \to \Dbl$ which we apply to the functor $U$.
\end{proof}
\noindent We may compose a colored Petri net $K : FN 
\to \Span(\CMon)$ with the double functor $\U$ to obtain a displayed category.
\begin{defn}\label{semantics}
The \define{categorical operational semantics functor} for colored Petri nets 
\[\int \maps \cPetri \to \Cat \]
is the composition of $\int \maps \Disp \to \Cat$ with $\Span(U) \maps \cPetri \to \Disp$.
Explicitly, for a colored Petri net $K \maps FP \to \Span(\CMon)$ to the category $\int K$ where:
\begin{itemize}
    \item An object is a pair $(m, x)$ where $m$ is an object of $FP$ and $x$ is an element of the commutative monoid $K(m)$.
    \item A morphism from $(m,x)$ to $(n,y)$ is a pair $(f,g)$ where $f \maps m \to n$ is a morphism in $FP$ and $g$ is an element in the apex of $F(f)$ which is mapped to $x$ and $y$ by the left and right legs of $F(f)$ respectively.
\end{itemize}
For a morphism of colored Petri nets 
   \[
    \begin{tikzcd}
        FP 
        \ar[dd, "{F(f, g)}", swap] 
        \ar[dr, "K"{name=U}, pos=0.2]
        &\\&  
        \Span(\CMon)
        \\
        FP'
        \ar[ur, "K'"{name=D}, swap, pos=0.15]
        & 
        \arrow[Rightarrow, from = U, to = D, shorten <=3ex, shorten >=3ex, "\alpha", swap]
    \end{tikzcd}
    \]
\end{defn}
\noindent The category $\int K$ is an operational semantics for the colored Petri net $K$ in the sense that morphisms of $\int K$ represent sequences of processes which can occur in $K$. 
\begin{prop}\label{soundness}
     There is bijection between step sequences in $K$ and morphisms in $\int K$. 
\end{prop}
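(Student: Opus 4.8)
The plan is to unwind both sides of the claimed bijection and exhibit a map each way, then check they are mutually inverse. The key observation is that the definition of $\int K$ (Definition~\ref{semantics}) and the definition of step sequence are designed to match term-for-term, so the work is bookkeeping rather than genuine construction; the real content is that composition in $\int K$, which is governed by the laxator $\phi$ of the composite displayed category $\U \circ K$, agrees with the composability condition on step sequences.

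First I would observe that a single atomic step and a single morphism of $\int K$ are literally the same datum. A step $(\tau, f)$ with source $(m,x)$ and target $(n,y)$ is an element $\tau \in \N[T]$ together with an element $f$ of the apex of $K(\tau)$ mapping to $x$ and $y$ under the left and right legs. A morphism of $\int K$ from $(m,x)$ to $(n,y)$ is a pair $(f, g)$ with $f \maps m \to n$ in $FP$ and $g$ in the apex of $(\U K)(f)$ mapping to $x$ and $y$. Since the morphisms of $FP$ out of a marking $m$ are exactly the firing patterns, i.e.\ elements of $\N[T]$ with the appropriate source and target, and since $\U$ acts as the identity on the underlying sets of the apexes, these two descriptions coincide. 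So I would set up the bijection on generators by sending $(\tau, f) \mapsto (\tau, f)$.

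Next I would handle sequences. Given a step sequence $\{(\tau_i, f_i)\}_{i=1}^k$, the composability condition states that the target of $(\tau_i, f_i)$ equals the source of $(\tau_{i+1}, f_{i+1})$; this is precisely the condition needed to form the iterated composite in $\int K$, where composition is computed by pulling back along the shared object and applying the laxator $\phi \maps (\U K)(f) \times_{(\U K)(d)} (\U K)(g) \to (\U K)(g \circ f)$. I would argue that the matched pairs $(f_i, f_{i+1})$ lying over a common marking are exactly elements of this pullback, and that $\phi$ sends them to the apex element representing the composite morphism. The case $k = 0$ matches the identity morphism on $(m,x)$ for each marking, as noted in both definitions. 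Running this in reverse, any morphism of $\int K$ decomposes uniquely into generating steps because $FP$ is freely generated on the transitions of $N$, so its morphisms factor into atomic firings; I would invoke this freeness (from the adjunction $F \dashv U$) to recover the step sequence from a morphism.

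The main obstacle, and the step deserving the most care, is the uniqueness of the decomposition: a morphism of $\int K$ determines a morphism of $FP$, but a given firing pattern in $FP$ may a priori admit several factorizations into atomic firings, and I must ensure the induced step sequence is well-defined up to the equivalence already quotiented out in $FP$. I would resolve this by working at the level of the freely generated commutative monoidal category, where the normal form of a morphism pins down the underlying element of $\N[T]$ and the laxator's functoriality (the coherence of the displayed-category structure of $\U \circ K$) guarantees that any two factorizations yield the same apex element. With this in hand, the two assignments are visibly inverse, establishing the bijection.
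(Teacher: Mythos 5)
There is a genuine gap, and it sits at the very first step of your argument. A single step and a single morphism of $\int K$ are \emph{not} the same datum, because morphisms of $FP$ out of a marking $m$ are not ``exactly the elements of $\N[T]$ with the appropriate source and target.'' The category $FP$ is freely generated by the transitions under \emph{both} the monoidal sum $+$ and composition $\circ$, so a typical morphism is a formal composite (a firing sequence), not a single parallel firing; only the one-step morphisms are named by elements of $\N[T]$, and no relations in the free category collapse a composite such as $\tau_2 \circ \tau_1$ to a generator. Indeed, if your identification were correct, the proposition itself would become inconsistent: a step sequence of length $k > 1$ composes to a single morphism, which by your identification would again be a single step, so sequences and morphisms could not be in bijection. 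This misidentification hides exactly where the real work lies.

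Because of it, your reverse direction is missing its two essential ingredients, which are how the paper actually proceeds. First, given a morphism $(f,g)$ of $\int K$, one must use the interchange law $(c + d) \circ (a + b) = c \circ a + d \circ b$ inductively to rewrite the arbitrary free expression $f$ as a composite of sums $f_1 \circ f_2 \circ \cdots \circ f_n$ with each $f_i$ an element of $\N[T]$; your appeal to ``the normal form of a morphism pins down the underlying element of $\N[T]$'' is not correct as stated, since a genuine composite has no single underlying element of $\N[T]$. Second --- and this you never address --- one must decompose the apex element $g \in K(f)$, not just $f$. For this the paper invokes the fact that $K$ preserves composition \emph{strongly}: the composition comparison gives an isomorphism
\[
K(f) \cong K(f_1) \oplus_{K(x_2)} K(f_2) \oplus_{K(x_3)} \cdots \oplus_{K(x_n)} K(f_n),
\]
and transporting $g$ backward through this isomorphism yields the composable tuple $(g_1, \ldots, g_n)$ whose pairs $(\tau_i, g_i)$ form the desired step sequence. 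A merely lax comparison --- which is all your forward-direction discussion of the laxator uses --- would not be invertible, so the strongness of $K$ must be cited explicitly. Your forward map (compose the steps of a sequence in $\int K$) does agree with the paper's; it is the inverse construction that needs to be rebuilt along these lines.
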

\begin{proof}

Each step $(\tau ,f)$ with source $(m,x)$ and target $(n,y)$ gives a morphism 
$(\tau, f) \maps (m,x) \to (n,y)$. A step sequence $\{(\tau_i,f_i)\}_{i=1}^{k}$ gives a morphism by composing all the morphisms $(\tau_i,f_i)$ with each other in the natural way.

Conversely, let $(f,g) \maps (m,x) \to (n,y)$ be a morphism in $\int K$. $FP$ is freely generated by the transitions of $P$ under $+$ and composition. Therefore $f$ will be equal to an expression built from the basic transitions using $+$ and $\circ$.
The operation $+$ satisfies the interchange law
\[ c \circ a  + d \circ b =(c+ d) \circ (a +b) \]
whenever all composites are defined. This law can be used inductively to turn $f$ into a composite of sums
\[f_1 \circ f_2 \circ \ldots \circ f_n \]
where each $f_i \maps x_i \to x_{i+1}$ is a sum of transitions in $P$.
Because $K$ preserves composition strongly, there is an isomorphism
\[K(f) =K (f_1 \circ f_2 \ldots \circ f_n ) \cong K (f_1) \oplus_{K(x_2)} K(f_2) \oplus_{K(x_3)} \ldots \oplus_{K(x_{n}) } K(f_n).  \]
This isomorphism gives a way to decompose the element $g \in K(f)$ into a tuple of composable elements $g_1,g_2,\ldots, g_n$ where each $g_i$ is an element of $K(f_i)$. The pairs $(\tau_i,g_i)$ give a step sequence of $K$ which corresponds to the morphism $(f,g)$.
\end{proof}
\noindent Note that functoriality of $\int$ justifies the definition of morphism for colored Petri nets. Definition \ref{semantics} shows how morphisms of colored Petri nets lift to functors between their categories of step sequences. For a morphism of colored Petri nets $(j,\alpha) \maps K \to K'$, functoriality of $\int (j, \alpha) \maps \int K \to \int K'$ says that the mapping between step sequences induced by $\int (j,\alpha)$ respects concatenation.

\section{The Operational Semantics is Free on the Unfolding}\label{unfoldingtwo}Unfolding is a useful technique which reduces the analysis of colored Petri nets to that of ordinary Petri nets. This correspondence was first noticed by Jensen in the first volume of his book \emph{Coloured Petri nets: basic concepts, analysis methods and practical use} \cite{jensen2013coloured}. Since then, efficient algorithms have been developed for unfolding colored Petri nets \cite{systemsbio}, \cite{schwarick2020efficient}, \cite{kordon2006optimized}. It turns out that the operational semantics defined in the previous section is always the $\CMC$ on a some Petri net. The well-known unfolding construction gives this Petri net which generates $
\int K$.
\begin{defn}\label{unfold}
For a colored Petri net $(S,T,A,C,E)$ as in Definition \ref{ogcnet} with underlying Petri net $\begin{tikzcd}T \ar[r,shift left=.5ex,"s"] \ar[r,shift right=.5ex,"t",swap] & \N[S] \end{tikzcd}$. The \define{unfolding} of $K$ is the Petri net
\[
\begin{tikzcd} 
\UNFOLD(K) = \hat{T} \ar[r,shift left=.5ex,"\hat{s}"] \ar[r, shift right=.5ex,"\hat{t}",swap] & \N[\hat{S}]
\end{tikzcd}\]
where 
\begin{itemize}
    \item $\hat{T} = \{ (\tau, f) \, | \, \tau \in T \text{ and }f \in C(\tau)\}$,
    \item $\hat{S} = \{ (p, x) \,|\, p \in S \text{ and } x \in C(p)  \}$,
    \item $\hat{s} \maps \hat{T} \to \N[\hat{S}]$ sends a pair $(\tau,f)$ to the pair $(s(\tau), E((\tau,s(\tau)),f))$ and,
    \item $\hat{t} \maps \hat{T} \to \N[\hat{S}]$ sends a pair $(\tau, f)$ to the pair $(t(\tau),E((\tau,t(\tau)),f) )$.
\end{itemize}
\end{defn}
\noindent One advantage of reframing colored Petri nets in terms of category theory is that the definition of unfolding can be justified by categorical means.
The categorical semantics for Petri nets and colored Petri nets can be drawn together to get
\[
\begin{tikzcd} 
& \Petri \ar[dr,"F"] & \\
\cPetri \ar[rr,"\int",swap]  & &\CMC.
\end{tikzcd}
\]

\noindent To unfold a colored Petri net, we wish to find a functor $\mathtt{UNFOLD} \maps \cPetri \to \Petri$ such that for a colored Petri net $K$, the semantics of it's unfolding $F ( \mathtt{UNFOLD} (K) )$ matches it's original semantics $\int K$. This property says roughly that the behavior of an unfolded colored Petri must match the behavior of colored Petri net you started with. We may phrase this property as a triangle of functors commuting up to natural isomorphism.

\begin{thm}\label{unfold}
    The unfolding construction extends to a functor
    \[ \UNFOLD \maps \cPetri \to \Petri\]
    such that the diagram
\[
\begin{tikzcd} 
& \Petri \ar[dr,"F"] \ar[d,phantom,"\cong"] & \\
\cPetri \ar[ur,"\UNFOLD"] \ar[rr,"\int",swap]  & \, &\CMC.
\end{tikzcd}
\]
commutes up to natural isomorphism.
\end{thm}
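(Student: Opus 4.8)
The plan is to prove the two assertions of the theorem separately: first that $\UNFOLD$ is a functor, and then that $F \circ \UNFOLD$ is naturally isomorphic to $\int$. For functoriality, Definition \ref{unfold} already fixes $\UNFOLD$ on objects, so the work is to say what $\UNFOLD$ does to a morphism of colored Petri nets $(f, g, \alpha) \maps K \to K'$. On unfolded transitions I would send $(\tau, h) \mapsto (f(\tau), \alpha_\tau(h))$, where $\alpha_\tau$ is the apex component of the span morphism that $\alpha$ assigns to $\tau$, and on unfolded places I would use the color component $\alpha_p$. Checking that this respects $\hat s$ and $\hat t$ is exactly the statement that the two squares making up the span morphism $\alpha_\tau$ commute, so it follows immediately from Definition \ref{morphism}; preservation of identities and composites then reduces to the fact that composition in $\cPetri$ is defined componentwise.

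The heart of the theorem is the natural isomorphism. For a fixed $K$ I would produce an isomorphism $\Phi_K \maps F(\UNFOLD K) \xrightarrow{\sim} \int K$ of commutative monoidal categories by exploiting that $F(\UNFOLD K)$ is free. By the adjunction $F \dashv U$, a strict monoidal functor $F(\UNFOLD K) \to \int K$ is the same datum as a morphism of Petri nets $\UNFOLD K \to U(\int K)$ into the underlying Petri net of the commutative monoidal category $\int K$, so I would define this morphism on generators: a place $(p, c) \in \hat S$ is sent to the object $(p, c)$ of $\int K$ (the base marking $p$ carrying the single color $c$), and an unfolded transition $(\tau, h) \in \hat T$ is sent to the atomic morphism of $\int K$ determined by the step $(\tau, h)$, whose source and target are read off from the left and right legs of the span $K(\tau)$. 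Compatibility of this assignment with $\hat s$ and $\hat t$ is precisely the bookkeeping recorded by the arc inscriptions $E$ in Definition \ref{unfold}, transported through $K$.

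To see that $\Phi_K$ is an isomorphism rather than merely a functor, I would argue separately on objects and on morphisms. On morphisms, Proposition \ref{soundness} already identifies the morphisms of $\int K$ with the step sequences of $K$, and a step sequence of $K$ is, unwinding Definition \ref{unfold}, exactly a firing sequence of the ordinary Petri net $\UNFOLD K$, hence a morphism of $F(\UNFOLD K)$; the normal-form argument in the proof of Proposition \ref{soundness}, repeatedly applying the interchange law to write a morphism as a composite of sums of generators, is what guarantees this correspondence is well defined and bijective, so $\Phi_K$ is full and faithful. On objects I would check that the commutative monoid of objects of $\int K$ is freely generated by the markings $(p, c)$, so that it is isomorphic to $\N[\hat S]$, the object monoid of $F(\UNFOLD K)$.

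This last object-level claim is where I expect the real difficulty to lie, and it is the step I would be most careful about. The fibered description of an object of $\int K$ as a pair $(m, x)$ with $x \in K(m)$ carries apparent redundancy, both in the base marking $m$ and in the way the coloring $x$ is distributed across token slots, and one must verify that exactly this redundancy is collapsed by the commutative monoidal (strictly symmetric) structure, so that $\Phi_K$ is a strict monoidal isomorphism on the nose and not merely an equivalence. Threading the coherence isomorphisms $\gamma$ and $\phi$ of the strong double functor $K$ from Proposition \ref{correspond} through this identification, and confirming that they become identities after passing to the free commutative monoidal category, is the crux; the same matching of single colors against the monoid $K(p)$ is what underlies the well-definedness of $\UNFOLD$ on morphisms. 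Once $\Phi_K$ is established, naturality in $K$ is a diagram chase that, by freeness and monoidality, need only be verified on the generating places and transitions, where it reduces to the definition of $\UNFOLD$ on morphisms and to the components of $\alpha$.
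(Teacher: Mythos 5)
Your proposal is correct, and its mathematical core coincides with the paper's: both arguments identify the object monoid of $\int K$ with $\N[\hat S]$ via $\N[\hat S]\cong\N[\coprod_{p}C(p)]\cong\bigoplus_{p}\N[C(p)]$ (using that $K$ is monoidal), and both handle morphisms by the interchange-law normal-form argument already used in Proposition \ref{soundness}; your extension of $\UNFOLD$ to morphisms, $(\tau,h)\mapsto(f(\tau),\alpha_\tau(h))$ and $(p,x)\mapsto(g(p),\alpha_p(x))$, is exactly the paper's. Where you genuinely diverge is in how the comparison is packaged: the paper directly writes down an isomorphism $\Ob\int K\to\N[\hat S]$ (and a correspondence of morphisms) and never checks that this assembles into a strict monoidal functor, whereas you first invoke the universal property of the free commutative monoidal category (equivalently the adjunction $F\dashv U$) to obtain a canonical strict monoidal functor $\Phi_K\maps F(\UNFOLD K)\to\int K$ from its values on generating places and transitions, and only then prove it is an isomorphism. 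This buys you two things the paper's proof lacks: strict monoidality of the comparison is automatic rather than implicit, and naturality of $\Phi_K$ in $K$ --- which the theorem asserts but the paper's proof never addresses --- reduces by freeness to a check on generators, where it is just the definition of $\UNFOLD$ on morphisms. You are also right to flag the strictness of the object-level identification as the crux: the theorem needs $\int K$ to carry a \emph{strictly} commutative monoidal structure (the paper's own conclusion admits the displayed-category machinery ignores this monoidal structure), and threading the coherence data $\gamma$, $\phi$ of the strong double functor $K$ through the identification is exactly the point both proofs must, and the paper's proof silently does, pass through. One small caution: the ``bijection'' of Proposition \ref{soundness} identifies morphisms of $\int K$ with step sequences only up to the interchange equivalence (distinct step sequences can present the same morphism), so in your full-and-faithfulness argument you should phrase the middle term as equivalence classes of step sequences, which is precisely what the normal-form argument furnishes on both sides.
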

\begin{proof}
$\UNFOLD$ is extended to morphisms as follows:
For a morphism of colored Petri nets $(f,g,\alpha) \maps K \to K'$, $\UNFOLD(f,g,\alpha) \maps \UNFOLD(K) \to \UNFOLD(K')$ consists of the maps
\begin{align*}
\hat{f}   \maps  \hat{T} \to \hat{T'} &\quad \quad\quad \text{and} &\hat{g}   \maps  \hat{S} \to \hat{S'}\\
      (\tau,k) \mapsto (f(\tau), \alpha_{\tau} (k) ) &  &(p,x) \mapsto (g(p), \alpha_{p} (x) )
\end{align*}
\noindent $(\hat{f},\hat{g})$ is indeed a morphism of Petri nets. Commuting with the source and target in the first component follows from $(f,g)$ being a morphism of Petri nets. Commuting with the source and target in the second component follows from $\alpha_\tau$ taking part in the commutative diagram
\[
\begin{tikzcd}
    K(s(\tau))
    \arrow[d, "\alpha_{s(\tau)}", swap]
    &
    K(\tau)
    \arrow[d, "\alpha_{\tau}"]
    \arrow[r]
    \arrow[l]
    &
    K(t(\tau))
    \arrow[d, "\alpha_{t(\tau)}"]
    \\
    K'(s'(f(\tau)))
    &
    K'(f(\tau))
    \arrow[r]
    \arrow[l]
    &
    K'(t'(f(\tau))).
\end{tikzcd}
\]
Functoriality of $\UNFOLD$ follows very quickly from the definitions.

Next we show that $\UNFOLD$ respects the categorical semantics functors for $\cPetri$ and $\Petri$. For a colored Petri net $K$, we have that an isomorphism of categories
\[F \circ \UNFOLD(K) \cong \int K\]
First note that we have a bijection
\[\hat{S} \cong \coprod_{p \in S} C(p) \]
between $\hat{S}$ and the disjoint union of all possible colors. Therefore, the objects of $F \circ \UNFOLD (K)$ are given by
\[\N[\hat{S}] \cong \N[\coprod_{p \in S} C(p)] \cong \oplus_{p \in S} \N[ C(p) ]  \]
where the last isomorphism follows from $\N \maps \Set \to \CMon$ preserving coproducts. On the other hand,
\[\Ob \int K = \{(m,x) \, | \, m \in \N[S] \text{ and } x \in K(m) \}. \]
Because $K$ preserves the monoidal product, $K(m)$ will be isomorphic to the direct sum of commutative monoids $K(p)$ for $p \in S$. Therefore, $m$ will be a tuple of markings $(m_1, m_2, \ldots, m_k)$ where each $m_i$ is an element of the free commutative monoid on the color set for some species. An isomorphism, $\Ob \int K \to \N[\hat{S}]$ of commutative monoids is constructed by sending the pair $(x,(m_1,m_2,\ldots,m_k))$ to the sum $\sum_{i} m_i$ which is evidently an element of $\N[\hat{S}]$. As in the proof of Proposition \ref{soundness}, we may use the interchange law to turn a morphism in $F \circ \UNFOLD (K)$ into a composite of tuples $(\tau_1,f_1)\circ (\tau_2,f_2)\circ \ldots \circ (\tau_n,f_n)$. This composite is mapped to the morphism of $\int K$ given by $(\tau_1 \circ \tau_2 \circ \ldots \circ \tau_n, (f_1,f_2,\ldots,f_n))$ which again by the interchange law represents every morphism in $\int K$.
\end{proof}

\begin{examples}
We can unfold the colored Petri net in Example \ref{colornet} to get the following:
\[\begin{tikzpicture}
        \node [style=place] (25in) at (0,1) {25};
        \node [style=place] (50in) at (0,-1) {50};
        \node [style=place] (25out) at (0, 3) {25};
        \node [style=place] (50out) at (0,-3) {50};
        \node [style=place] (apple) at (-3,0) {apple};
        \node [style=place] (bar) at (3,0) {bar};
        \node [style=transition] (exit1) at (0, 2) {}
        edge [pre] (25in)
        edge [post] (25out);
        \node [style=transition] (exit2) at (0, -2) {}
        edge [pre] (50in)
        edge [post] (50out);
        \node [style=transition] (apple1) at (-1.5, 1) {}
        edge [pre] node {3} (25in)
        edge [post] (apple);
        \node [style=transition] (apple2) at (-1.5, -1) {}
        edge [pre]  (25in)
        edge [pre]  (50in)
        edge [post] (apple);
        \node [style=transition] (apple3) at (-1.5, 0) {}
        edge [pre] node {2} (50in)
        edge [post] (apple)
        edge [post] (25out);
        \node [style=transition] (bar1) at (1.5, .5) {}
        edge [pre] (25in)
        edge [post] (bar);
        \node [style=transition] (bar2) at (1.5, -.5) {}
        edge [pre] (50in)
        edge [post] (bar)
        edge [post] (25out);
\end{tikzpicture}\]
The reader may verify that there is indeed an isomorphism $\int K \cong F \circ \UNFOLD K$.
\end{examples}

\section{Conclusion}
Characterizing colored Petri nets categorically will allow them to be fit into the larger program of applied category theory. In particular, we conjecture:
\begin{itemize}
\item Using the structured cospan construction \cite{StructuredCospans}, we can create a compositional framework for open colored Petri nets. These would be colored Petri nets which are equipped with a boundary on their inputs and outputs. One advantage of this would be a categorical description of the reachability problem for colored Petri nets as shown in \cite{OpenPetriNets}.
\item We may be able to construct a \emph{double operadic systems theory} as in \cite{libkind2025doubleoperadictheorysystems} where open colored Petri nets are the objects. We believe that the double categorical presentation of colored Petri nets given in this paper is more ammenable to fitting into the above sort of systems theory.
\item We aim to find a full categorical framework for SDCPNs (as first defined in \cite{everdij1978compositional}). Since SDCPNs are colored Petri nets equipped with lots of bells and whistles, this paper represents a first step in the above goal.
\item The displayed category construction as stated in this paper ignores the monoidal structure which is present in colored Petri nets. In \cite{moeller2020monoidal}, the Grothendieck construction is lifted to an equivalence between monoidal indexed categories and monoidal fibrations. We believe that the displayed category construction could be similarly lifted to an equivalence between \emph{monoidal} displayed categories and \emph{monoidal} functors. We aim to develop the theory of monoidal displayed categories in order to further refine our understanding of the categorical semantics of colored Petri nets.
\end{itemize}
\appendix
\section{Double categories}

\subsection*{Double Categories}

What follows are brief definitions of double categories, lax and strong double functors, vertical transformations, monoidal double categories, and lax monoidal double functors. A more detailed exposition can be found in the work of Grandis and Par\'e \cite{grandis2004adjoint,grandis1999limits}, and for monoidal double categories the work of Shulman \cite{FramedBicats}. We use `double category' to mean what earlier authors called a `pseudo double category'.

\begin{defn}
\label{defn:double_category}
    A \textbf{double category} is a category weakly internal to $\Cat$. More explicitly, a double category $\mathbb{D}$ consists of:
    \begin{itemize}
        \item a \define{category of objects} $\mathbb{D}_0$ and a \define{category of arrows} $\mathbb{D}_1$,
        \item  \define{source} and \define{target} functors
        \[  
            S,T \colon \mathbb{D}_1 \to \mathbb{D}_0 ,
        \]
        an \define{identity-assigning} functor
        \[  
            U\colon \mathbb{D}_0 \to \mathbb{D}_1 ,
        \]
        and a \define{composition} functor
        \[ 
            \odot \colon \mathbb{D}_1 \times_{\mathbb{D}_0} \mathbb{D}_1 \to \mathbb{D}_1 
        \]
        where the pullback is taken over $\mathbb{D}_1 \xrightarrow[]{T} \mathbb{D}_0 \xleftarrow[]{S} \mathbb{D}_1$,
        such that
        \[  
            S(U_{A})=A=T(U_{A}) , \quad
        	S(M \odot N)=SN, \quad
            T(M \odot N)=TM, 
        \]
        \item natural isomorphisms called the \define{associator}
        \[ 
            \alpha_{N,N',N''} \maps (N \odot N') \odot N'' \xrightarrow{\sim} N \odot (N' \odot N'') , 
        \]
        the \define{left unitor}
        \[		
            \lambda_N \maps U_{T(N)} \odot N \xrightarrow{\sim} N, 
        \]
        and the \define{right unitor}
        \[  
            \rho_N \maps N \odot U_{S(N)} \xrightarrow{\sim} N 
        \]
        such that $S(\alpha), S(\lambda), S(\rho), T(\alpha), T(\lambda)$ and $T(\rho)$ are all identities and such that the standard coherence axioms hold: the pentagon identity for the associator and the triangle identity for the left and right unitor \cite[Sec.\ VII.1]{mac1998categories}.
    \end{itemize}
    If $\alpha$, $\lambda$ and $\rho$ are identities, we call $\mathbb{D}$ a \define{strict} double category.
\end{defn}

Objects of $\mathbb{D}_0$ are called \define{objects} and morphisms in $\mathbb{D}_0$ are called \define{vertical 1-morphisms}.  Objects of $\mathbb{D}_1$ are called \define{horizontal 1-cells} of $\mathbb{D}$ and morphisms in $\mathbb{D}_1$ are called \define{2-morphisms}.   A morphism $\alpha \maps M \to N$ in $\mathbb{D}_1$ can be drawn as a square:
\[
\begin{tikzpicture}[scale=1]
\node (D) at (-4,0.5) {$A$};
\node (E) at (-2,0.5) {$B$};
\node (F) at (-4,-1) {$C$};
\node (A) at (-2,-1) {$D$};
\node (B) at (-3,-0.25) {$\Downarrow \alpha$};
\path[->,font=\scriptsize,>=angle 90]
(D) edge node [above]{$M$}(E)
(E) edge node [right]{$g$}(A)
(D) edge node [left]{$f$}(F)
(F) edge node [above]{$N$} (A);
\end{tikzpicture}
\]
where $f = S\alpha$ and $g = T\alpha$. Note that every category $C$ can be upgraded to a double category: the sets $\Ob \,\,C$ and $\Mor \, \,C$ can be regarded as categories with only identity morphisms and the structure maps (source, target, identities, and composition) extend to functors between these categories in a unique way. In this way $C$ gives a trivial double category where the only vertical morphisms and $2$-morphisms are given by identities. 

There are maps between double categories, and also transformations between maps:
\begin{defn}
\label{defn:double_functor}
Let $\mathbb{A}$ and $\mathbb{B}$ be double categories. A \textbf{double functor} $F \maps \mathbb{A} \to \mathbb{B}$ consists of:
\begin{itemize}
\item functors $F_0 \maps \mathbb{A}_0 \to \mathbb{B}_0$ and $F_1 \maps \mathbb{A}_1 \to \mathbb{B}_1$ obeying the following
equations: 
\[S \circ F_1 = F_0 \circ S, \qquad T \circ F_1 = F_0 \circ T,\]
\item natural isomorphisms called the \define{composition comparison}: 
\[   \phi(N,N') \maps F_1(N) \odot F_1(N') \stackrel{\sim}{\longrightarrow} F_1(N \odot N') \]
and the \define{identity comparison}:
\[  \phi_{A} \maps U_{F_0 (A)} \stackrel{\sim}{\longrightarrow} F_1(U_A) \]
whose components are globular 2-morphisms, 
\end{itemize}
such that the following diagram commmute:
\begin{itemize} 
\item a diagram expressing compatibility with the associator: 
\[\xymatrix{ 	(F_1(N) \odot F_1(N')) \odot F_1(N'') \ar[d]_{\phi (N,N') \odot 1} \ar[r]^{\alpha} & F_1(N) \odot (F_1(N') \odot F_1(N'')) \ar[d]^{1 \odot \phi(N',N'')} \\
			F_1(N \odot N') \odot F_1(N'') \ar[d]_{\phi(N \odot N', N'')} & F_1(N) \odot F_1(N' \odot N'') \ar[d]^{\phi(N, N'\odot N'')}\\
F_1((N \odot N') \odot N'') \ar[r]^{F_1(\alpha)} & F_1(N \odot (N' \odot N'')) }	\]
\item two diagrams expressing compatibility with the left and right unitors:
	\[
	\begin{tikzpicture}[scale=1.5]
	\node (A) at (1,1) {$F_1(N) \odot U_{F_0(A)}$};
	\node (A') at (1,0) {$F_1(N) \odot F_1(U_{A})$};
	\node (C) at (3.5,1) {$F_1(N)$};
	\node (C') at (3.5,0) {$F_1(N \odot U_A)$};
	\path[->,font=\scriptsize,>=angle 90]
	(A) edge node[left]{$1 \odot \phi_{A}$} (A')
	(C') edge node[right]{$F_1(\rho_N)$} (C)
	(A) edge node[above]{$\rho_{F_1(N)}$} (C)
	(A') edge node[above]{$\phi(N,U_{A})$} (C');
	\end{tikzpicture}
	\]
	\[
	\begin{tikzpicture}[scale=1.5]
	\node (B) at (5.5,1) {$U_{F_0(B)} \odot F_1(N)$};
	\node (B') at (5.5,0) {$F_1(U_{B}) \odot F_1(N)$};
	\node (D) at (8,1) {$F_1(N)$};
	\node (D') at (8,0) {$F_1(U_{B} \odot N).$};
		\path[->,font=\scriptsize,>=angle 90]
		(B) edge node[left]{$\phi_{B} \odot 1$} (B')
	(B') edge node[above]{$\phi(U_{B},N)$} (D')
	(B) edge node[above]{$\lambda_{F_1(N)}$} (D)
	(D') edge node[right]{$F_1(\lambda_{N})$} (D);
	\end{tikzpicture}
	\]
\end{itemize}
If the 2-morphisms $\phi(N,N')$ and $\phi_A$ are identities for all $N,N' \in \mathbb{A}_1$ and 
$A \in \mathbb{A}_0$, we say $F \maps \mathbb{A} \to \mathbb{B}$ is a \define{strict} double functor.  If on the other hand we drop the requirement that these 2-morphisms be invertible, we call $F$ a \define{lax} double
functor.
\end{defn}
	
\begin{defn}
Let $F \maps \mathbb{A} \to \mathbb{B}$ and $G \maps \mathbb{A} \to \mathbb{B}$ be lax double functors. A \define{vertical transformation} $\beta \maps F \Rightarrow G$ consists of natural transformations $\beta_0 \maps F_0 \Rightarrow G_0$ and $\beta_1 \maps F_1 \Rightarrow G_1$ (both usually written as $\beta$) such that 
		\begin{itemize}
			\item $S( \beta_M) = \beta_{SM}$ and $T(\beta_M) = \beta_{TM}$ for any object $M \in A_1$, 
			\item $\beta$ commutes with the composition comparison, and
			\item $\beta$ commutes with the identity comparison.
		\end{itemize}
\end{defn}
	
Shulman defines a 2-category $\mathbf{Dbl}$ of double categories, double functors, and vertical transformations \cite{FramedBicats}. This has finite products.  In any 2-category with finite products we can define a pseudomonoid \cite{Monoidalbicatshopfalgebroids}, which is a categorification of the concept of monoid.  For example, a pseudomonoid in $\mathsf{Cat}$ is a monoidal category.
	
\begin{defn}
\label{defn:monoidal_double_category}
    A \textbf{monoidal double category} is a pseudomonoid in $\mathbf{Dbl}$. Explicitly, a monoidal double category is a double category equipped with double functors $\otimes \maps \mathbb{D} \times \mathbb{D} \to \mathbb{D}$ and $I \maps * \to \mathbb{D}$ where $*$ is the terminal double category, along with invertible vertical transformations called the \define{associator}:
    \[  
        A \maps \otimes \, \circ \; (1_{\mathbb{D}} \times \otimes ) \Rightarrow \otimes \; \circ \; (\otimes \times 1_{\mathbb{D}}) ,
    \]
\define{left unitor}:
\[ L\maps \otimes \, \circ \; (1_{\mathbb{D}} \times I) \Rightarrow 1_{\mathbb{D}} ,\]
and \define{right unitor}:
\[ R \maps \otimes \,\circ\; (I \times 1_{\mathbb{D}}) \Rightarrow 1_{\mathbb{D}} \]
satisfying the pentagon axiom and triangle axioms.
\end{defn}

This definition neatly packages a large quantity of information.   Namely:
\begin{itemize}
\item $\mathbb{D}_0$ and $\mathbb{D}_1$ are both monoidal categories.
\item If $I$ is the monoidal unit of $\mathbb{D}_0$, then $U_I$ is the
monoidal unit of $\mathbb{D}_1$.
\item The functors $S$ and $T$ are strict monoidal.
\item $\otimes$ is equipped with composition and identity comparisons
\[ \chi \maps (M_1\otimes N_1)\odot (M_2\otimes N_2) \stackrel{\sim}{\longrightarrow}
(M_1\odot M_2)\otimes (N_1\odot N_2)\]
\[ \mu \maps U_{A\otimes B} \stackrel{\sim}{\longrightarrow} (U_A \otimes U_B)\]
making three diagrams commute as in the definition of double functor.

\item The associativity isomorphism for $\otimes$ is a vertical transformation between double functors.
		\item The unit isomorphisms are vertical transformations
between double functors.
	\end{itemize}

	\begin{defn}
	\label{defn:symmetric_monoidal_double_category}
A \define{braided monoidal double category} is a monoidal double
category equipped with an invertible vertical transformation
\[ \beta \maps \otimes \Rightarrow \otimes \circ \tau \]
called the \define{braiding}, where $\tau \maps \mathbb{D} \times \mathbb{D} \to \mathbb{D} \times \mathbb{D}$ is the twist double functor sending pairs in the object and arrow categories to the same pairs in the opposite order. The braiding is required to satisfy the usual two hexagon identities \cite[Sec.\ XI.1]{mac1998categories}.  If the braiding is self-inverse we say that $\mathbb{D}$ is a \define{symmetric monoidal double category}.
	\end{defn}
	
In other words:
\begin{itemize}
		\item $\mathbb{D}_0$ and $\mathbb{D}_1$ are braided (resp. symmetric) monoidal categories,
		\item the functors $S$ and $T$ are strict braided monoidal functors, and
		\item the braiding is a vertical transformation between double functors.
\end{itemize}

\begin{defn}
\label{defn:monoidal_double_functor}
    A \define{monoidal lax double functor} $F \colon \mathbb{C} \to \mathbb{D}$ between monoidal double categories $\mathbb{C}$ and $\mathbb{D}$ is a lax double functor $F \maps \mathbb{C} \to \mathbb{D}$ such that
	\begin{itemize}
		\item $F_0$ and $F_1$ are monoidal functors,
		\item $SF_1= F_0S$ and $TF_1 = F_0T$ are equations between monoidal functors, and
		\item the composition and unit comparisons $\phi(N_1,N_2) \maps F_1(N_1) \odot F_1(N_2) \to F_1(N_1\odot N_2)$ and $\phi_A \maps U_{F_0 (A)} \to F_1(U_A)$ are monoidal natural vertical transformations.
	\end{itemize}
    The monoidal lax double functor is \define{braided} if $F_0$ and $F_1$ are braided monoidal functors and \define{symmetric} if they are symmetric monoidal functors. 
    
    A fundamental example is the symmetric monoidal double category of spans.
    \begin{defn}\label{span}
    Let $\Span$ denote the double category where
\begin{itemize}
\item the category of objects is given by $\Set$: the category of sets and functions,
\item the category of morphisms $\Span_1$ has objects given by spans of functions
   \[\begin{tikzcd}A &\ar[l]X \ar[r] & B \end{tikzcd}\]
and a morphism from $\begin{tikzcd}A &\ar[l]X \ar[r] & B \end{tikzcd}$ to $\begin{tikzcd}A' &\ar[l]Y \ar[r] & B' \end{tikzcd}$ is a commuting diagram
    \[
    \begin{tikzcd}
        A\ar[d] & X \ar[r] \ar[d] \ar[l] & B \ar[d] \\
        A' & Y \ar[r] \ar[l] & B'.
    \end{tikzcd}
    \]
    \item The source and target functors $S,T \maps \Span_1 \to \Set $ send a span \[\begin{tikzcd}A &\ar[l]X \ar[r] & B \end{tikzcd}\] to $A$ and $B$ respectively.
    \item The identity assigning functor $U\maps \Set \to \Span_1$ sends a set $X$ to the span \[\begin{tikzcd}X &\ar[l,"1_x",swap]X \ar[r,"1_x"] & X \end{tikzcd}\].
    \item The composition functor, $\circ \maps \Span_1 \times_{\Set} \Span_1 \to \Span_1$, sends a pair of spans to their pullback and a pair of 2-morphisms to the universal map induced by the pullback.
    \item The associator, the left unitor and, the right unitor are all given by canonical isomorphisms.
\end{itemize} 
\end{defn}
\end{defn}

\bibliographystyle{alpha}
\bibliography{references}


\end{document}